\newtheorem{theorem}{Theorem}[section]
\newtheorem{cor}[theorem]{Corollary}
\newtheorem{proposition}[theorem]{Proposition}
\theoremstyle{definition}
\theoremstyle{remark}
\newtheorem{remark}[theorem]{Remark}
\numberwithin{equation}{section}
\def\pfaf{\mathrm{Pf}}
\def\sgn{\mathrm{sgn}}
\def\M{\mathcal{M}}
\def\cro{\textrm{cr}}
\newcommand\C{\mathbb{C}}
\newcommand\N{\mathbb{N}}
\def\L{\mathcal L}
\def\A{{\mathbf A}}
\def\B{{\mathbf B}}
\def\e{{\mathbf E}}
\def\f{{\mathbf F}}
\begin{document}
\title[ A quadratic  formula for basic hypergeometric series  ]{A quadratic  formula for basic hypergeometric series related to Askey-Wilson polynomials}
%    author one information
\author[V. J. W. Guo]{Victor J. W. Guo}
\address{Department of Mathematics, East China Normal University, Shanghai 200062,
 People's Republic of China}
\email{jwguo@math.ecnu.edu.cn}
%%%%%%%%%%%%%%%%%%%
\author[M. Ishikawa]{Masao Ishikawa}
\address{Department of Mathematics, Faculty of Education, University of the Ryukyus, Nishihara, Okinawa
901-0213, Japan}
\email{ishikawa@edu.u-ryukyu.ac.jp}
%%%%%%%%%%%%%%%%%%%
\author[H. Tagawa]{Hiroyuki Tagawa}
\address{Department of Mathematics, Faculty of Education, Wakayama University,
Sakaedani, Wakayama 640-8510, Japan}
\thanks{Partially supported by JSPS Grant-in-Aid for Scientific Research~(C)~23540017.}
\email{tagawa@math.edu.wakayama-u.ac.jp}
%    author two information
\author[J. Zeng]{Jiang Zeng}
\address{Universit\'e de Lyon; Universit\'e Lyon 1; Institut Camille
Jordan, UMR 5208 du CNRS; 43, boulevard du 11 novembre 1918,
F-69622 Villeurbanne Cedex, France}
\email{zeng@math.univ-lyon1.fr}
\thanks{Partially supported by CMIRA COOPERA 2012 de  la R\'egion Rh\^one-Alpes.}

\date{November 7, 2012 and, in revised form, July 17, 2013.}

\subjclass[2010]{Primary 33C45; 33D45; Secondary 05A19}

\keywords{quadratic formula of basic hypergeometric series, Askey-Wilson polynomials, moments of Askey-Wilson polynomials,
Gram determinants, Pfaffians, Desnanot--Jacobi adjoint matrix theorem.}

\dedicatory{%This paper is d
Dedicated to Srinivasa Ramanujan on the occasion of
his 125th birth anniversary}

%    "Communicated by" -- provide editor's name; required.
\commby{Sergei Suslov}

\maketitle

%\vskip 0.7cm \noindent{\bf Abstract.}
\begin{abstract}
We prove a general quadratic formula for basic hypergeometric series, from which simple proofs of  several recent determinant and Pfaffian formulas
are obtained.
A special case of the quadratic formula is actually related to
a Gram determinant formula for  Askey-Wilson polynomials.
We also  show how to derive a
  recent double-sum formula for the moments of Askey-Wilson polynomials from   Newton's
interpolation formula.
\end{abstract}

\section{Introduction}
Throughout this paper we assume that $q$ is a fixed number in $(0,\; 1)$.
A $q$-shifted factorial is defined by
\[
(a;q)_\infty=\prod_{k=0}^{\infty}(1-aq^{k}),\quad\text{and}
\quad (a;q)_n=\frac{(a;q)_\infty}{(aq^n;q)_\infty}, \quad\text{for}
\quad n\in\mathbb{Z}.
\]
Following Gasper and Rhaman~\cite{GR} we shall use  the abbreviated notation
$$
(a_1,a_2,\ldots,a_r;q)_n
=(a_1;q)_n (a_2;q)_n \cdots(a_r;q)_n,
\quad\text{for}\quad n\in\mathbb{Z}.
$$
A  {\it basic hypergeometric series with $r$ numerators and $s$ denominators}
is then defined by
%The \defterm{${}_{r}\phi_{s}$ basic hypergeometric series} is defined by
\begin{align*}
{}_{r}\phi_{s}
\left[\,
\begin{matrix}
{a_{1},a_{2},\dots,a_{r}}\\
{b_{1},\dots,b_{s}}
\end{matrix}
\,;\,q,z
\,\right]
=
\sum_{n=0}^{\infty}
\frac{(a_{1},a_{2},\dots,a_{r};q)_{n}}{(q,b_{1},\dots,b_{s};q)_{n}}
\left((-1)^nq^{\binom{n}{2}}\right)^{1+s-r}z^n.
\end{align*}
The Askey-Wilson polynomials $p_{n}(x;a,b,c,d;q)$ ($n\in \N$) are the ${}_{4}\phi_{3}$ polynomials~\cite{AW,GR}
\begin{align*}
\frac{(ab,ac,ad;q)_{n}}{a^{n}}
{}_{4}\phi_{3}\left[\!\!\! \begin{array}{c}
q^{-n},\, abcdq^{n-1},\,ae^{I\theta},\, ae^{-I\theta}\\
ab,\, ac,\, ad\end{array}\!\!\!;q,q\right],
\end{align*}
where $x=\cos\theta$ and $I$ is a complex number such that $I^2=-1$.

Our main result is the following quadratic formula for basic hypergeometric series, 
which was discovered by applying 
Desnanot--Jacobi adjoint matrix theorem  to compute some determinants of Mehta-Wang type~\cite{Meh:Wan, Nis, Kratt02, GITZ13}
or  deformed Gram determinants of orthogonal  polynomials~\cite{Wilson}.

\begin{theorem}
\label{GITZ}
Let $r,s\geq 0$, $a,b,c,d,q\in \C$, $\e_r=(e_1,e_2,\dots,e_r)\in \C^r$, $\f_s=(f_1,f_2,\dots,f_s)\in \C^s$.
Then we have
\begin{multline}\label{main}
(a-b)(a-c)(bc-d)(1-d)\\
\times
{}_{r+4}\phi_{s+3}
\left[
\begin{matrix}
a^{-1}bc,bcq^{-2},c,dq^{-1},\e_r\cr
aq^{-1},bq^{-1},bcd^{-1},\f_s\cr
\end{matrix};
q,z
\right]
{}_{r+4}\phi_{s+3}
\left[
\begin{matrix}
a^{-1}bc,bc,c,dq,\e_r q\cr
aq,bq,bcd^{-1},\f_s q\cr
\end{matrix};
q,q^{s-r}z
\right]\\
=
(a-d)
(1-b)
(1-c)
(bc-ad)\\
\quad\times
{}_{r+4}\phi_{s+3}
\left[
\begin{matrix}
a^{-1}bc,bcq^{-2},cq^{-1},d,\e_r\cr
aq^{-1},b,bcd^{-1}q^{-1},\f_s\cr
\end{matrix};
q,z
\right]
{}_{r+4}\phi_{s+3}
\left[
\begin{matrix}
a^{-1}bc,bc,cq,d,\e_r q\cr
aq,b,bcd^{-1}q,\f_s q\cr
\end{matrix};
q,q^{s-r}z
\right]
\\
\quad
-
(1-a)
(b-d)
(c-d)
(a-bc)\\
\quad
\times
{}_{r+4}\phi_{s+3}
\left[
\begin{matrix}
a^{-1}bcq^{-1},bcq^{-2},c,d,\e_r \cr
a,bq^{-1},bcd^{-1}q^{-1},\f_s \cr
\end{matrix};
q,z
\right]
{}_{r+4}\phi_{s+3}
\left[
\begin{matrix}
a^{-1}bcq,bc,c,d,\e_r q\cr
a,bq,bcd^{-1}q,\f_s q\cr
\end{matrix};
q,q^{s-r}z
\right].
\end{multline}
\end{theorem}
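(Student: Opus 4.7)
I would prove \eqref{main} by extracting the coefficient of $z^N$ from both sides as formal power series in $z$. In each of the six ${}_{r+4}\phi_{s+3}$ series appearing in \eqref{main}, the auxiliary parameters enter identically: every ``$z$''-series contributes a factor $(\e_r;q)_n/(\f_s;q)_n$ at level $n$, every ``$q^{s-r}z$''-series contributes $(\e_r q;q)_m/(\f_s q;q)_m$ at level $m$, and the exponent $1+(s+3)-(r+4) = s-r$ of $(-1)^n q^{\binom{n}{2}}$ together with the extra $q^{(s-r)m}$ produced by the rescaling $z \mapsto q^{s-r}z$ is common to all six series. Consequently, when one multiplies out and collects the coefficient of $z^N$, these auxiliary pieces factor out of the identity term-by-term in each pair $(n,m)$ with $n+m = N$. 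The theorem therefore reduces to the special case $r=s=0$: a single algebraic identity, indexed by $(n,m) \in \N^2$, in the variables $a,b,c,d,q$ alone.

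\textbf{Reduction to a polynomial identity.} The identity at level $(n,m)$ has the shape
\[
L_1\,\alpha_1(n)\beta_1(m) \;=\; L_2\,\alpha_2(n)\beta_2(m) \;-\; L_3\,\alpha_3(n)\beta_3(m),
\]
where $L_1,L_2,L_3$ are the three quartic prefactors of \eqref{main} and each $\alpha_i(n)$ (resp.\ $\beta_i(m)$) is a ratio of four $q$-Pochhammer symbols of length $n$ (resp.\ $m$) read off from the $i$-th ``$z$''-series (resp.\ ``$q^{s-r}z$''-series). Dividing by $L_3\alpha_3(n)\beta_3(m)$ and writing $X = q^n$, $Y = q^m$, each ratio $\alpha_i/\alpha_3$ and $\beta_i/\beta_3$ collapses, via the elementary simplifications $(xq;q)_k/(x;q)_k = (1-xq^k)/(1-x)$ and $(xq^{-1};q)_k/(x;q)_k = (1-xq^{-1})/(1-xq^{k-1})$, to an explicit rational function of $X$ only, resp.\ $Y$ only. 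After clearing denominators the whole claim is a single polynomial identity in $a,b,c,d,q,X,Y$ of bounded degree. The specialisation $X=Y=1$ (i.e.\ $n=m=0$) is the three-term Pl\"ucker-type relation
\[
(a-b)(a-c)(bc-d)(1-d) \;=\; (a-d)(1-b)(1-c)(bc-ad) \;-\; (1-a)(b-d)(c-d)(a-bc),
\]
which one verifies by checking that both sides vanish on each of the four divisors $a=b$, $a=c$, $d=1$, $d=bc$, have the same total degree in $(a,b,c,d)$, and agree in a single leading coefficient.

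\textbf{Main obstacle and alternative.} The genuine difficulty is the bivariate refinement in $(X,Y)$. A workable direct strategy is to treat each side as a polynomial in $X$, observe that obvious linear factors in $X$ coming from the denominators of $\alpha_i/\alpha_3$ are matched by numerators of the opposing term, and then pin down the residue by matching leading and trailing powers of $X$ and $Y$, finally invoking the already-established $(n,m)=(0,0)$ case to fix the overall constant. A conceptually cleaner route, and the one implicit in the introduction's reference to the Desnanot--Jacobi adjoint matrix theorem, is to realise \eqref{main} as the Desnanot--Jacobi identity for some $3 \times 3$ matrix $M$ whose four relevant $2 \times 2$ minors are proportional to the six ${}_{r+4}\phi_{s+3}$ series of \eqref{main} and whose central $1 \times 1$ and full $3 \times 3$ determinants carry the quartic factors $L_1, L_2, L_3$. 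Guessing such an $M$ explicitly --- likely built from truncated Askey--Wilson-type data, in view of the connection with Gram determinants cited in the introduction --- is the creative step, and is presumably how the formula was originally discovered.
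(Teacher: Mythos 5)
There is a genuine gap at the very first step: the coefficient of $z^N$ identity does \emph{not} hold term-by-term in the pairs $(n,m)$ with $n+m=N$, so the claimed reduction
\[
L_1\,\alpha_1(n)\beta_1(m)=L_2\,\alpha_2(n)\beta_2(m)-L_3\,\alpha_3(n)\beta_3(m)
\]
is simply false for $N\geq 2$ (it happens to hold for $N=0,1$, which is why the check of the quartic relation $L_1=L_2-L_3$ at $n=m=0$ is misleadingly encouraging). What is actually true, and what the paper proves, is that the $(n,m)$-term and the $(m+1,n-1)$-term cancel in pairs: writing $T_k$ for the $k$-th summand of $L_1$(first product)$-L_2$(second)$+L_3$(third) in the coefficient of $z^N$, one has $T_k=(q^{N-k+1}-q^k)\,G_kG_{N-k+1}\,\Xi$ for an explicit $G_k$ and a $k$-independent $\Xi$; this expression is antisymmetric under $k\mapsto N-k+1$ but is generically nonzero (e.g.\ at $N=2$, $k=1$). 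Consequently your polynomial-in-$(X,Y)$ strategy is attempting to prove a false statement and cannot be repaired by matching degrees and leading coefficients. The pairing is also exactly what rescues the reduction to $r=s=0$: the auxiliary weight $(\e_r;q)_n(\e_r q;q)_m/\bigl((\f_s;q)_n(\f_s q;q)_m\bigr)$ attached to the $(n,m)$-term is invariant under $(n,m)\mapsto(m+1,n-1)$, so multiplying the paired terms by it preserves their cancellation; without identifying this symmetry the ``factor out term-by-term'' argument has nothing to factor out of.

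Your fallback --- realising \eqref{main} as a Desnanot--Jacobi identity for an unspecified $3\times3$ matrix --- is an acknowledged guess rather than an argument, so the proposal as it stands does not constitute a proof. To salvage your approach along the paper's lines you would need (i) the observation that $T_k+T_{N-k+1}=0$ suffices, and (ii) a closed form for $T_k$ exhibiting the antisymmetry; the paper obtains (ii) from a single checkable polynomial identity in seven variables specialised at $(x,y,z)=(q,q^k,q^N)$, which plays the role your Pl\"ucker-type relation plays only at $N=0$.
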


Let $s=r$, $d=a_0$, $c=a_1$, $b=0$, $a=b_1$, $e_1=f_1=0$, $e_i=a_i$ and $f_j=b_j$ ($2\leq j\leq r$)  we get the following 
result  by shifting $r$ to $r-3$.
\begin{cor}\label{THM}For $r\geq 1$, there holds
%\begin{gather}
\begin{multline}\label{eq:GZ}
 (a_0-{1})(a_1-b_1)\, \\
 \times _{r+1}\phi_{r}\left[\!\!\!\begin{array}{c}
a_0/q,a_1,a_2,\ldots,a_{r}\\
b_1/q,b_2,\ldots,b_{r}
\end{array}\!\!\!;q,z
\right]{}
_{r+1}\phi_{r}\left[\!\!\!\begin{array}{c}
a_0q,a_1,a_2q\ldots,a_{r}q\\
b_1q,b_2q,\ldots,b_{r}q
\end{array}\!\!\!;q,z
\right]  \\
=(a_0-a_1)(1-b_1)\, \\
\quad\times
_{r+1}\phi_{r}\left[\!\!\!\begin{array}{c}
a_0,a_1,a_2,\ldots,a_{r}\\
b_1,b_2,\ldots,b_{r}
\end{array}\!\!;q,z
\right]{}
_{r+1}\phi_{r}\left[\!\!\!\begin{array}{c}
a_0,a_1,a_2q\ldots,a_{r}q\\
b_1,b_2q,\ldots,b_{r}q
\end{array}\!\!\!;q,z
\right]\,  \\
\quad-(1-a_1)(a_0-b_1)\,  \\
\quad\times{} _{r+1}\phi_{r}\left[\!\!\begin{array}{c}
a_0,a_1/q,a_2,\ldots,a_{r}\\
b_1/q,b_2,\ldots,b_{r}
\end{array}\!\!\!;q,z
\right]{}
_{r+1}\phi_{r}\left[\!\!\!\begin{array}{c}
a_0,a_1q,a_2q\ldots,a_{r}q\\
b_1q,b_2q,\ldots,b_{r}q
\end{array}\!\!\!;q,z
\right]. 
\end{multline}
%\end{gather}
\end{cor}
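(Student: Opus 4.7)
The plan is to obtain Corollary~\ref{THM} as a direct specialization of Theorem~\ref{GITZ}, using the assignment indicated just above its statement: $s=r$, $d=a_0$, $c=a_1$, $b=0$, $a=b_1$, $e_1=f_1=0$, and $e_i=a_i$, $f_j=b_j$ for $2\le i,j\le r$. The crucial observation is that, under this specialization, each of the six ${}_{r+4}\phi_{r+3}$ series appearing in \eqref{main} acquires three matched pairs of zero parameters in its numerator and denominator lists --- the pairs coming from $a^{-1}bc$ and $bcd^{-1}$, from $bcq^{-2}$ and $bq^{-1}$ (or from $b$ itself in the right-hand-side terms), and from $e_1,f_1$. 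Because $(0;q)_n=1$, each such matched pair cancels summand by summand, and the factor $(-1)^n q^{\binom{n}{2}}$ disappears because its exponent $1+s-r$ vanishes when $s=r$. Each ${}_{r+4}\phi_{r+3}$ therefore collapses to a ${}_{r+1}\phi_r$ (the announced shift $r\mapsto r-3$ merely records the removal of three parameter pairs), and $q^{s-r}z=z$ in the second factor of each product.

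Next I would identify the six collapsed series against those in \eqref{eq:GZ}. After cancellation and an obvious reordering of the numerator parameters, the LHS product of \eqml{main} becomes the LHS product of \eqref{eq:GZ}. An important observation on the right is that the \emph{first} RHS product of \eqref{main} becomes the \emph{second} RHS product of \eqref{eq:GZ}, while the second RHS product of \eqref{main} becomes the first RHS product of \eqref{eq:GZ} --- that is, the two RHS terms swap places when one passes from the theorem to the corollary.

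Finally I would evaluate the four prefactors of \eqref{main} at the specialized parameters. A direct computation gives
\[
(a-b)(a-c)(bc-d)(1-d)=-a_0 b_1\,(a_0-1)(a_1-b_1),
\]
\[
(a-d)(1-b)(1-c)(bc-ad)=-a_0 b_1\,(b_1-a_0)(1-a_1),
\]
\[
(1-a)(b-d)(c-d)(a-bc)=-a_0 b_1\,(1-b_1)(a_1-a_0).
\]
All three share the common factor $-a_0 b_1$. Dividing the specialized identity \eqref{main} through by $-a_0 b_1$ --- legitimate as a rational-function identity, with the boundary cases $a_0=0$ or $b_1=0$ recovered by continuity in those parameters --- and reconciling signs in light of the swap noted above reproduces exactly the three prefactors $(a_0-1)(a_1-b_1)$, $(a_0-a_1)(1-b_1)$, and $-(1-a_1)(a_0-b_1)$ of \eqref{eq:GZ} in the correct positions.

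The only real work in this plan is careful bookkeeping: tracking which RHS term of the theorem maps to which RHS term of the corollary after the swap, and arranging the signs so that the common factor $-a_0 b_1$ is cleanly exhibited in all three coefficient expressions. Once this is done there is no further calculation and Corollary~\ref{THM} follows from Theorem~\ref{GITZ} by a mechanical substitution.
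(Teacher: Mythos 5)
Your proposal is correct and follows exactly the route the paper intends: the paper offers no written proof of Corollary~\ref{THM} beyond the one-line instruction to specialize Theorem~\ref{GITZ} with $s=r$, $b=0$, $d=a_0$, $c=a_1$, $a=b_1$, $e_1=f_1=0$, and you have verified the details (collapse of each ${}_{r+4}\phi_{r+3}$ to a ${}_{r+1}\phi_r$ via $(0;q)_n=1$, the swap of the two right-hand terms, and the common factor $-a_0b_1$) accurately. The only quibble is notational: the exponent of $(-1)^nq^{\binom{n}{2}}$ in a ${}_{r+4}\phi_{s+3}$ series is $1+(s+3)-(r+4)=s-r$, which is what vanishes at $s=r$, not the expression $1+s-r$ as written.
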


Taking $r=3$,  $z=q$, $a_{0}=q^{-n+1}$,
$a_{1}=abcdq^{n-1}$, $a_{2}=ae^{i\theta}$, $a_{3}=ae^{-i\theta}$,
$b_{1}=abq$,  $b_{2}=ac$ and $b_{3}=ad$ in Corollary~\ref{THM},
we obtain the following quadratic relation for Askey-Wilson polynomials.

\begin{cor}\label{awq} Let $n$ be a positive integer. There holds
\begin{multline}\label{eq:conj}
 ab(1-q^{n-1})(1-cdq^{n-2})p_{n}(x;a,b,c,d;q)p_{n-2}(x;aq,bq,c,d;q) \\
=(1-abq^{n-1})(1-abcdq^{n-1}) p_{n-1}(x;a,b,c,d;q) p_{n-1}(x;aq,bq,c,d;q)\\
{} -(1-ab)(1-abcdq^{2n-2}) p_{n-1}(x;aq,b,c,d;q) p_{n-1}(x;a,bq,c,d;q). \hskip 5.5mm
\end{multline}
\end{cor}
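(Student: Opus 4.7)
The plan is to derive Corollary~\ref{awq} directly from Corollary~\ref{THM} by the substitution given in the text above the statement, namely $r=3$, $z=q$, $a_0=q^{-n+1}$, $a_1=abcdq^{n-1}$, $a_2=ae^{i\theta}$, $a_3=ae^{-i\theta}$, $b_1=abq$, $b_2=ac$, $b_3=ad$. With these choices every one of the six ${}_4\phi_3$ series appearing in~(\ref{eq:GZ}) matches the Askey-Wilson template
\[
{}_4\phi_3\!\left[\begin{matrix}q^{-m},\alpha\beta\gamma\delta q^{m-1},\alpha e^{i\theta},\alpha e^{-i\theta}\\ \alpha\beta,\alpha\gamma,\alpha\delta\end{matrix};q,q\right]=\frac{\alpha^m}{(\alpha\beta,\alpha\gamma,\alpha\delta;q)_m}\,p_m(x;\alpha,\beta,\gamma,\delta;q)
\]
for a suitable quadruple $(\alpha,\beta,\gamma,\delta)$ and some $m\in\{n-2,n-1,n\}$.

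The six polynomial identifications are obtained by matching parameters. The left-hand side of~(\ref{eq:GZ}) returns $p_n(x;a,b,c,d;q)$ and $p_{n-2}(x;aq,bq,c,d;q)$. In the first product on the right, the unshifted denominator $b_1=abq$ together with the uniform rescaling by $q$ of the remaining numerator and denominator parameters in the second factor forces these factors to be $p_{n-1}(x;a,bq,c,d;q)$ and $p_{n-1}(x;aq,b,c,d;q)$ respectively. In the last product the shift $a_1\mapsto a_1/q$ restores the canonical shape $abcdq^{n-2}$ in the first factor, yielding $p_{n-1}(x;a,b,c,d;q)$, while the uniform $q$-rescaling in the second factor gives $p_{n-1}(x;aq,bq,c,d;q)$.

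It remains to collect the prefactors. Using the elementary telescoping identities $(abq^2;q)_{k}=(ab;q)_{k+2}/[(1-ab)(1-abq)]$ and $(acq;q)_k=(ac;q)_{k+1}/(1-ac)$ (and their analogues for $ad$) one reduces every Pochhammer symbol that appears to a ratio involving only $(ab,ac,ad;q)_n$, and verifies that each of the three terms of~(\ref{eq:GZ}) carries the common factor
\[
K=\frac{a^{2n-2}(1-ab)(1-abq)(1-ac)(1-ad)(1-acq^{n-1})(1-adq^{n-1})}{((ab,ac,ad;q)_n)^2}.
\]
After dividing out $K$ and expanding $a_0-1=q^{1-n}(1-q^{n-1})$, $a_1-b_1=-abq(1-cdq^{n-2})$, $a_0-a_1=q^{1-n}(1-abcdq^{2n-2})$, and $a_0-b_1=q^{1-n}(1-abq^n)$, the spurious factor $(1-abq^n)$ produced by $a_0-b_1$ cancels against the same factor appearing in the expansion of $(abq^2;q)_{n-1}$, and a final overall sign change (coming from the minus sign in $a_1-b_1$) converts Corollary~\ref{THM} into~(\ref{eq:conj}). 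The main obstacle is the disciplined bookkeeping of Pochhammer ratios at the three shift levels $n-2$, $n-1$, $n$; once this is organized, the identity follows with no further input beyond Corollary~\ref{THM}.
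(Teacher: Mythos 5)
Your proposal is correct and is exactly the route the paper takes: the paper offers no proof beyond the one-line instruction to substitute $r=3$, $z=q$, $a_0=q^{-n+1}$, $a_1=abcdq^{n-1}$, $a_2=ae^{i\theta}$, $a_3=ae^{-i\theta}$, $b_1=abq$, $b_2=ac$, $b_3=ad$ into Corollary~\ref{THM}, and your write-up simply carries out that substitution in detail. The six polynomial identifications, the common factor $K$, the cancellation of $(1-abq^{n})$, and the final sign flip all check out.
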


%\begin{remark} The special
% $x=0$ and $d=-c$ case of \eqref{eq:conj} was first proved   in \cite[Corollary 4.1]{ITZ12}.
%\end{remark}

We shall prove Theorem \ref{GITZ} in the next section.  In Section~3, 
we  use  the Desnanot-Jacobi adjoint matrix theorem and  Corollary~\ref{awq}
to derive a determinant formula for
Askey-Wilson polynomials (cf. Theorem~\ref{newthm}), which turns out to be a generalization of several recent 
determinant and Pffafian evaluations in~\cite{Meh:Wan,CK,Kratt02,ITZ09,ITZ10}.
In Section~4, we connect  the determinant formula in  Theorem~\ref{newthm} to  a Gram determinant
formula  of  Askey-Wilson  polynomials and show how to derive a recent double--sum formula for the moments of Askey-Wilson polynomials
in~\cite{CSSW,IR} from Newton's  interpolation formula.

%%%%%%%%%%%%
\section{Proof of Theorem \ref{GITZ}}
For $0\leq k\leq n$, we write
\begin{align*}
A_k&=
(a-b)
(a-c)
(bc-d)
(1-d)\alpha_k\alpha_{n-k+1}
\\&\times
\frac{
(a^{-1}bc,bcq^{-2},c,dq^{-1},\e_r;q)_k
(a^{-1}bc,bc,c,dq,\e_rq;q)_{n-k}
%\alpha_k\alpha_{n-k+1}
}
{
(q,aq^{-1},bq^{-1},bcd^{-1},\f_s;q)_k
(q,aq,bq,bcd^{-1},\f_sq;q)_{n-k}
},\\
B_k&=
(a-d)
(1-b)
(1-c)
(bc-ad)\alpha_k\alpha_{n-k+1}
\\&\times
\frac{
(a^{-1}bc,bcq^{-2},cq^{-1},d,\e_r;q)_k
(a^{-1}bc,bc,cq,d,\e_rq;q)_{n-k}
%\alpha_k\alpha_{n-k+1}
}
{
(q,aq^{-1},b,bcd^{-1}q^{-1},\f_s;q)_k
(q,aq,b,bcd^{-1}q,\f_sq;q)_{n-k}
},\\
C_k&=
(1-a)
(b-d)
(c-d)
(a-bc)\alpha_k\alpha_{n-k+1}
\\&\times
\frac{
(a^{-1}bcq^{-1},bcq^{-2},c,d,\e_r;q)_k
(a^{-1}bcq,bc,c,d,\e_rq;q)_{n-k}
%\alpha_k\alpha_{n-k+1}
}
{
(q,a,bq^{-1},bcd^{-1}q^{-1},\f_s;q)_k
(q,a,bq,bcd^{-1}q,\f_sq;q)_{n-k}
},
\end{align*}
where  $\alpha_k=\left\{(-1)^kq^{\frac{k(k-1)}{2}}\right\}^{s-r}$ for $k\geq 0$.
Equating the coefficients of $z^n$ on both sides of \eqref{main}
yields  the equivalent identity
\begin{align}\label{eq:sums}
\sum_{k=0}^{n}(A_{k}-B_{k}+C_{k})=0.
\end{align}
The key point  to prove \eqref{eq:sums} is the observation that
\begin{align}\label{eq:6terms}
A_{k}-B_{k}+C_{k}+\bigl(A_{n-k+1}-B_{n-k+1}+C_{n-k+1}\bigr)=0
\end{align}
for $0\leq k\leq n+1$,
where $A_{n+1}=B_{n+1}=C_{n+1}=0$. Indeed, 
summing \eqref{eq:6terms} over $k$ from $0$ to $n+1$ on both sides yields  immediately  \eqref{eq:sums}.

To prove \eqref{eq:6terms}  we start from the  identity 
\begin{multline}
(a - b) (a - c) (d - x) (b c - d x) 
(x - a y) \\
\times (x - b y) (x - c y) (y - d z) 
(a x - b c y) (d y - b c z)
\\
- (a - d) (b - x) (c - x) 
(a d - b c) 
(x - a y)\\ 
\times  (y - b z) (y - c z)
 (x - d y) 
(a x - b c y) (d x - b c y) \\
+
(b - d) (c - d) (a - x) 
(a x - b c) 
(y - a z)\\
\times  (x - b y) (x - c y) (x - d y) 
(a y - b c z) (d x - b c y)\\
=x y(a - b)(a - c)(a - d)(b - d)(c - d)\\
\times (1-y)(a d - b c)
(x - b c z) (x^2 - b c y)(y^2 - x z),\label{eqkxyz}
\end{multline}
which can be  easily checked either by hands or by Maple.
Replacing $(x,y,z)$ by $(q,q^k,q^n)$ in \eqref{eqkxyz}
and multiplying both sides of the resulting identity by
\[
\frac{
(c,d,a^{-1}bc;q)_{k-1}
(bcq^{-2},\e_r;q)_{k}
(c,d,a^{-1}bc,bc,\e_rq;q)_{n-k}
}{
(aq^{-1},bq^{-1},bcd^{-1}q^{-1};q)_{k+1}
(q,\f_s;q)_{k}
(aq,bq,bcd^{-1}q,q,\f_sq;q)_{n-k}
},
\]
we obtain
\begin{align}\label{eqkkare}
A_k-B_k+C_k
=(q^{n-k+1}-q^k)G_kG_{n-k+1}\Xi,
\end{align}
where 
\[
G_k=\frac{(1-q^k)(1-bcq^{k-2})
(a^{-1}bc,c,d;q)_{k-1}(bc;q)_{k-2}(\e_r;q)_k\alpha_k}
{(a,b,bcd^{-1},q;q)_k(\f_s;q)_k},
\]
and 
\begin{multline}\nonumber
\Xi=(a-b)(a-c)(a-d)(b-d)(c-d)(ad-bc)(1-bcq^{n-1})\\
\quad\times
\frac{
(1-a)(1-b)(1-bcd^{-1})(1-bcq^{-2})(1-bcq^{-1})(\f_s;q)_1
}
{
adq^2
(1-aq^{-1})(1-bq^{-1})(1-bcd^{-1}q^{-1})(\e_r;q)_1
},
\end{multline}
which  is independent of $k$.
Clearly  \eqref{eqkkare} implies \eqref{eq:6terms}.

\section{Application to determinant  and Pfaffian evaluation}
Given a matrix  $M$,
if $i_{1},\dots,i_{r}$ (resp. $j_{1},\dots,j_{r}$)
are  row (resp. column) indices,
 we denote by $M_{i_{1},\dots,i_{r}}^{j_{1},\dots,j_{r}}$ the matrix that remains when the rows $i_{1}, \ldots, i_{r}$ and columns $j_{1}, \ldots, j_{r}$ are deleted.
Let $n\geq 2$ and $M$  be  an $n\times n$ matrix.  Then the Desnanot-Jacobi adjoint matrix theorem \cite[Lemma~7.7]{Aigner} reads
\begin{align}
\det M\, \det M^{1 \; n}_{1\; n}=\det M^{1}_{1}\,\det M^{n}_{n}
-\det M^{1}_{n}\,\det M^{n}_{1}, \label{eq:Desnanot-Jacobi}
\end{align}
where we set $\det M^{1 \; n}_{1\; n}=1$ if $n=2$.

\begin{theorem}\label{newthm} For $n\geq 1$, $0\leq i\leq n-1$ and $1\leq j\leq n$, let
\begin{align}\label{defB}
 B_{i,j}=& \frac{(ab ; q)_{i+j-1} (-bq^{-1+j})}{ (abcd ; q)_{i+j} }\\
& \times \bigl[c + d -2x+ (1- cd)(aq^{i} + bq^{j-1}) - ab(c+d-2cdx)q^{i+j-1}\bigr].\nonumber
\end{align}
Then
\begin{align}\label{eq:det}
\det ( B_{i,j} )_{0 \le i \le n-1, 1 \le j \le n}
 = D_n(a,b)\cdot p_n(x ; a,b,c,d ; q),
\end{align}
where
\begin{align}\label{defD}
 D_n(a,b)=a^{n(n-1)/2} b^{n(n+1)/2} q^{n(n-1)(2n-1)/6}
 \prod_{i=0}^{n-1}
 \frac{ (ab, cd, q;q)_{i}}{ (abcd ; q)_{n+i} }.
\end{align}
\end{theorem}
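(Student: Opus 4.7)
The plan is to prove Theorem~\ref{newthm} by induction on $n$, using the Desnanot--Jacobi identity \eqref{eq:Desnanot-Jacobi} applied to $M=(B_{i,j})_{0\le i\le n-1,\,1\le j\le n}$ in conjunction with the Askey--Wilson quadratic relation from Corollary~\ref{awq}. The base case $n=1$ reduces to checking $B_{0,1}=D_1(a,b)\,p_1(x;a,b,c,d;q)$ by direct substitution into \eqref{defB}, \eqref{defD}, and the ${}_4\phi_3$ definition of $p_1$.

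The first technical step is to establish the two elementary shift relations
\[
 B_{i+1,j}(a,b,c,d)=\frac{1-ab}{1-abcd}\,B_{i,j}(aq,b,c,d),\qquad B_{i,j+1}(a,b,c,d)=\frac{1-ab}{1-abcd}\,B_{i,j}(a,bq,c,d),
\]
which follow from $(ab;q)_{i+j}=(1-ab)(abq;q)_{i+j-1}$ and $(abcd;q)_{i+j+1}=(1-abcd)(abcdq;q)_{i+j}$ together with the observation that the bracketed expression in \eqref{defB} is invariant under either substitution. Writing $F_0=\tfrac{1-ab}{1-abcd}$ and $F_1=\tfrac{1-abq}{1-abcdq}$, reindexing each minor of $M$, and invoking the induction hypothesis for matrices of sizes $n-1$ and $n-2$, one obtains
\begin{align*}
 \det M^n_n &= D_{n-1}(a,b)\,p_{n-1}(x;a,b,c,d;q),\\
 \det M^1_n &= F_0^{n-1}D_{n-1}(aq,b)\,p_{n-1}(x;aq,b,c,d;q),\\
 \det M^n_1 &= F_0^{n-1}D_{n-1}(a,bq)\,p_{n-1}(x;a,bq,c,d;q),\\
 \det M^1_1 &= (F_0F_1)^{n-1}D_{n-1}(aq,bq)\,p_{n-1}(x;aq,bq,c,d;q),\\
 \det M^{1,n}_{1,n} &= (F_0F_1)^{n-2}D_{n-2}(aq,bq)\,p_{n-2}(x;aq,bq,c,d;q).
\end{align*}

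Plugging these into \eqref{eq:Desnanot-Jacobi} and writing $\det M=D_n(a,b)\cdot P$ for an unknown $P$, the theorem reduces to a three-term quadratic relation involving $P\,p_{n-2}(x;aq,bq,c,d;q)$, $p_{n-1}(x;a,b,c,d;q)\,p_{n-1}(x;aq,bq,c,d;q)$, and $p_{n-1}(x;aq,b,c,d;q)\,p_{n-1}(x;a,bq,c,d;q)$. Corollary~\ref{awq} provides precisely such a relation with $P$ replaced by $p_n(x;a,b,c,d;q)$, so it will force $P=p_n(x;a,b,c,d;q)$ as soon as the coefficients coming from the $D$-ratios and the powers of $F_0,F_1$ match the coefficients $ab(1-q^{n-1})(1-cdq^{n-2})$, $(1-abq^{n-1})(1-abcdq^{n-1})$, and $(1-ab)(1-abcdq^{2n-2})$ of Corollary~\ref{awq}, up to a common scalar.

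The main obstacle is exactly this final bookkeeping, which boils down to verifying the two ratio identities
\[
 \frac{(F_0F_1)\,D_{n-1}(aq,bq)\,D_{n-1}(a,b)}{D_n(a,b)\,D_{n-2}(aq,bq)}=\frac{(1-abq^{n-1})(1-abcdq^{n-1})}{ab(1-q^{n-1})(1-cdq^{n-2})},
\]
together with its analogue for $F_0^{2(n-1)}D_{n-1}(aq,b)D_{n-1}(a,bq)/\bigl((F_0F_1)^{n-2}D_n(a,b)D_{n-2}(aq,bq)\bigr)$. Both are explicit but somewhat laborious calculations from the closed form \eqref{defD}, requiring one to telescope the products $(ab,cd,q;q)_i/(abcd;q)_{n+i}$ and to track the extra powers of $a$, $b$, and $q^{n(n-1)(2n-1)/6}$. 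Once those two checks are completed, the induction closes and the theorem follows.
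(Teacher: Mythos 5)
Your proposal is correct and follows essentially the same route as the paper: induction via the Desnanot--Jacobi identity, reduction of the minors to $M_{n-1}$ with shifted parameters (your shift relations for $B_{i,j}$ are exactly what the paper uses implicitly), and matching against the quadratic relation of Corollary~\ref{awq}; your two ratio identities are precisely the paper's \eqref{D1} and \eqref{D2}. No gaps.
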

\begin{proof}
By \eqref{defD} we have
\begin{align*}
{D_n(a,b)}/{D_{n-1}(a,b)}
&=a^{n-1}b^nq^{(n-1)^2}\frac{(ab,cd;q)_{n-1}(q;q)_{n-1}}{(abcdq^{n-1};q)_{n}(abcd;q)_{2n-2}},\\
{D_{n}(aq,b)}/{D_{n}(a,b)}
&=q^{n(n-1)/2}\frac{(abq;q)_{n-1}}{(abcdq^n;q)_{n}}
\frac{(1-abcd)^{n}}{(1-ab)^{n-1}}.
\end{align*}
Therefore
\begin{align}\label{D1}
\frac{D_n(a,b)/D_{n-1}(a,b)}{D_{n-1}(aq,bq)/D_{n-2}(aq,bq)}=\frac{ab(ab;q)_2(1-cdq^{n-2})(1-q^{n-1})}{(1-abq^{n-1})(1-abcdq^{n-1})(abcd;q)_2},
\end{align}
and
\begin{multline}\label{D2}
 \frac{D_{n-1}(aq,b)/D_{n-1}(a,b)}{D_{n-1}(aq,bq)/D_{n-1}(a,bq)}\\
=\frac{1-abq}{1-abq^{n-1}}
\frac{1-abcdq^{2n-2}}{1-abcdq^{n-1}}
\Bigl(\frac{1-abq}{1-ab}\Bigr)^{n-2}
\Bigl(\frac{1-abcd}{1-abcdq}\Bigr)^{n-1}.
\end{multline}
Let $M_{n}(a,b): =\det ( B_{i,j} )_{0 \le i \le n-1, 1 \le j \le n}$.
For $n=1$,   formula~\eqref{eq:det}  is obvious. Assume that $n\geq 2$.
Applying \eqref{eq:Desnanot-Jacobi} to the determinant in \eqref{eq:det} we obtain
\begin{multline*}%\label{eq:recurrence}
M_{n}(a,b)\,M_{n-2}(aq, bq)
=\frac{(ab;q)_{2}}{(abcd;q)_{2}} M_{n-1}(a,b)\, M_{n-1}(aq, bq)\\
-\Bigl( \frac{1-ab}{1-abcd} \Bigr)^{n}\Bigl( \frac{1-abcdq}{1-abq} \Bigr)^{n-2}
M_{n-1}(aq,b)\, M_{n-1}(a, bq).
\end{multline*}
It suffices to show that if we substitute $M_n(a,b)$ by
 $D_n(a,b)p_n(x;a,b,c,d;q)$ the above identity still holds,  i.e., for $n\geq 2$,
\begin{multline}\label{eq:key}
D_n(a,b)D_{n-2}(aq,bq)p_n(x;a,b,c,d;q) p_{n-2}(x;aq,bq,c,d;q)\\
=\frac{(ab;q)_{2}}{(abcd;q)_{2}}  D_{n-1}(a,b)D_{n-1}(aq,bq)p_{n-1}(x; a,b,c,d;q) p_{n-1}(x;aq,bq,c,d;q)\\
\quad{}\ -\Bigl( \frac{1-ab}{1-abcd} \Bigr)^{n}\Bigl( \frac{1-abcdq}{1-abq} \Bigr)^{n-2}
 D_{n-1}(aq,b)D_{n-1}(a,bq)\\
 \qquad{}\ \times p_{n-1}(x;a,bq,c,d;q) p_{n-1}(x; aq,b,c,d;q).
\end{multline}
Dividing the two sides of \eqref{eq:key}  by $D_{n-1}(a,b)D_{n-1}(aq,bq)$ and
applying the two  identities \eqref{D1} and \eqref{D2}, we see  that   \eqref{eq:key}
is exactly   the quadratic formula \eqref{eq:conj}.
 The result then follows by induction on $n$.
\end{proof}

The following formula  is  an extension of
Nishizawa's $q$-analogue of  Mehta-Wang's formula~\cite{Meh:Wan, Nis, GITZ13}.
%%%%%%%%%%%%%
\begin{cor} For $n\geq 1$,  there holds
\begin{multline}\label{eq:ITZ1}
\det\Bigl((q^{i-1}-cq^{j-1})\frac{(aq;q)_{i+j-2}}{(abq^{2};q)_{i+j-2}}\Bigr)_{1\leq i,j\leq n}\\
=(-1)^{n}a^{\frac{n(n-3)}{2}}q^{\frac{n(n+1)(2n-5)}{6}}(abcq;q^{2})_{n}
\prod_{k=1}^{n}\frac{(q;q)_{k-1}(aq;q)_{k}(bq;q)_{k-2}}{(abq^{2};q)_{k+n-2}} \\
\times {}_{4}\phi_{3}\left[\!\!\!\begin{array}{c}
q^{-n},\, \,abq^{n},\,(acq)^{\frac{1}{2}},\,-(acq)^{\frac{1}{2}}\\
aq,\, (abcq)^{\frac{1}{2}},\, -(abcq)^{\frac{1}{2}}
\end{array}\!\!\!;q,q\right].   
\end{multline}
\end{cor}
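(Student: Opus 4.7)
The plan is to specialize Theorem~\ref{newthm} to a specific choice of Askey--Wilson parameters. To avoid clashing notation, write $A,B,C,D$ for the parameters appearing in \eqref{defB}--\eqref{defD}, keeping $a,b,c$ reserved for the constants in \eqref{eq:ITZ1}.

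Inspecting the bracket $L := C+D-2x+(1-CD)(Aq^{i}+Bq^{j-1})-AB(C+D-2CDx)q^{i+j-1}$ in \eqref{defB}, the key observation is that setting $x=0$ together with $C+D=0$ kills both the constant and the mixed $q^{i+j-1}$ terms, leaving $L=(1-CD)(Aq^{i}+Bq^{j-1})$. To make the product $(-Bq^{j-1})\cdot L=-(1-CD)(ABq^{i+j-1}+B^{2}q^{2j-2})$ factor as $q^{j-1}-cq^{i}$ up to a common prefactor, I would further impose $AB=aq$ and $B^{2}=-aq/c$, i.e.\ $A^{2}=-acq$, and set $CD=b$. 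Under these constraints, a brief computation using $(abq;q)_{i+j-1}=(1-abq)(abq^{2};q)_{i+j-2}$ yields, after shifting the row index by one,
\[
B_{i-1,j}=\frac{(1-b)(a/c)q^{j}}{1-abq}\cdot(q^{j-1}-cq^{i-1})\cdot\frac{(aq;q)_{i+j-2}}{(abq^{2};q)_{i+j-2}}\qquad(1\le i,j\le n),
\]
which is a column-rescaled transpose of the target matrix in \eqref{eq:ITZ1}. Pulling out the column factor and invoking Theorem~\ref{newthm} expresses the determinant in \eqref{eq:ITZ1} as $D_n(A,B)\cdot p_n(0;A,B,C,D;q)$ times $(1-abq)^{n}/[(1-b)^{n}(a/c)^{n}q^{n(n+1)/2}]$.

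It remains to identify this with the right-hand side of \eqref{eq:ITZ1}. With $x=0$, $\theta=\pi/2$ gives $Ae^{\pm I\theta}=\pm\sqrt{acq}$, and $AB=aq$, $AC\cdot AD=A^{2}CD=-abcq$, $AC+AD=0$, so $\{AC,AD\}=\{\pm\sqrt{abcq}\}$ and $ABCDq^{n-1}=abq^{n}$; hence the ${}_{4}\phi_{3}$ in $p_n$ is exactly the one in \eqref{eq:ITZ1}. The Askey--Wilson prefactor simplifies as $(AB,AC,AD;q)_n/A^{n}=(aq;q)_n\,(abcq;q^{2})_n/A^{n}$, using $(\sqrt{abcq};q)_n(-\sqrt{abcq};q)_n=(abcq;q^{2})_n$. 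The square-root parameters $A,B$ combine via the identity $A^{n(n-1)/2}B^{n(n+1)/2}/A^{n}=(AB)^{n(n-3)/2}B^{2n}=(aq)^{n(n-3)/2}(-aq/c)^{n}$, which is visibly polynomial in $a,c,q$ and absorbs the sign $(-1)^n$.

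The main obstacle is the bookkeeping. I would need to check (i) that the rearrangement $\prod_{i=0}^{n-1}(b;q)_i/(1-b)^n=\prod_{k=1}^n(bq;q)_{k-2}$ and analogous identities for $(aq;q)_i$, $(q;q)_i$, and $(abq;q)_{n+i}/(1-abq)$ collapse the products from $D_n(A,B)$ and the column factor into $\prod_{k=1}^n (q;q)_{k-1}(aq;q)_k(bq;q)_{k-2}/(abq^{2};q)_{k+n-2}$; and (ii) that the exponent of $q$ telescopes to $n(n+1)(2n-5)/6$, via the algebraic identity $n(n-1)(2n-1)/6-n=n(n+1)(2n-5)/6$, equivalently $(n-1)(2n-1)-6=(n+1)(2n-5)$. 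Once these bookkeeping checks are carried out, the resulting formula is exactly \eqref{eq:ITZ1}.
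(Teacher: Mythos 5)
Your proposal is correct and is essentially the paper's own proof: the paper likewise specializes Theorem~\ref{newthm} at $x=0$ with the Askey--Wilson parameters $a\leftarrow (aq/c)^{1/2}I$, $b\leftarrow -(acq)^{1/2}I$, $c\leftarrow b^{1/2}I$, $d\leftarrow -b^{1/2}I$ (so $AB=aq$, $CD=b$, $C+D=0$), differing from your choice only by swapping the roles of $A$ and $B$ --- which the paper absorbs via the $a\leftrightarrow b$ symmetry of $p_n$ and which for you merely produces the transpose of the target matrix --- and by the same observation $(bq;q)_{-1}=1/(1-b)$ that you use in your product rearrangement. The bookkeeping identities you list in (i) and (ii) do check out, so the argument goes through as written.
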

\begin{proof}
Since the  Askey-Wilson polynomials are  symmetric on $a$ and $b$, in  \eqref{eq:det}
replacing  $p_n(x;a,b,c,d;q)$  by $p_n(x ; b,a,c,d ; q)$ and
making  the following substitution:
$$
x\leftarrow 0,\quad a\leftarrow (aq/c)^{1/2}I, \quad b\leftarrow  -(acq)^{1/2}I,\quad \quad c\leftarrow  b^{1/2}I,\quad d\leftarrow  -b^{1/2}I
$$
where $I^2=-1$, gives \eqref{eq:ITZ1} as $(bq;q)_{-1}=1/(1-b)$.
\end{proof}

%
%\begin{remark}
%\begin{equation}
%\det\bigl((a+j-i)\Gamma(b+i+j)\bigr)_{0\leq i,j\leq n-1}
%= D_{n}\prod_{i=0}^{n-1}i!\Gamma(b+i),
%\end{equation}
%where
%$D_{n}$ satisfies the three term recurrence relation
%\begin{equation*}
%D_{-1}=0,\quad
%D_{0}=1,\quad
%D_{n+1}=aD_{n}+n(b+n-1)D_{n-1},
%\end{equation*}
%%
%which can be considered as the recurrence relation for a special case of 
%the Meixner-Pollaczek polynomials.
%
%
%%
%For $a, b\in\mathbb{C}$, Nishizawa~\cite{Nis} proved the following $q$-analogue
%\begin{align}
%&\det\bigl([a+j-i]_{q}\Gamma_{q}(b+i+j)\bigr)_{0\leq i,j\leq n-1}
%\\
%&\qquad
%= q^{na+n(n-1)b/2+n(n-1)(2n-7)/6}\,D_{n,q}
%\prod_{k=0}^{n-1}
%[k]_{q}!\cdot\Gamma_{q}(b + k),
%\end{align}
%where $D_{n,q}$ satisfies the recurrence relation
%\begin{align*}
%&D_{-1,q}=0,\quad D_{0,q}=1,\quad
%D_{n+1,q}=q^{-a+n}[a]_{q}D_{n,q}+q^{-a-b}[n]_{q}[b+n-1]_{q}D_{n-1,q}.
%\end{align*}
%\end{remark}

If $c=1$, we can sum the  $_{4}\phi_{3}$ in \eqref{eq:ITZ1}  by
Andrews'  terminating $q$-analogue of Watson's formula \cite[II.17]{GR}
\begin{align*} %\label{eq:andrews}
 {}_{4}\phi_{3}\left[\!\!\!\begin{array}{c}
q^{-n},\, a^{2}q^{n+1},\,b,\, -b\\
aq,\, -aq,\, b^{2}\end{array}\!\!\!;q,q\right]
=\begin{cases}
0,&\text{if $n$ is odd},\\[5pt]
\displaystyle\frac{b^{n}(q, a^{2}q^{2}/b^{2}; q^{2})_{n/2}}
{(a^{2}q^{2},\, b^{2}q; q^{2})_{n/2}},& \text{if $n$ is even},
\end{cases}
\end{align*}
and  deduce  the following  result,  which was  first proved in ~\cite{ITZ10}, and is also
a  $q$-analogue of \cite[Theorem 6]{Kratt02}.
\begin{cor}\label{cor32} For $m\geq 1$, there holds
\begin{align*}
&\hskip -2mm \det\Bigl((q^{i-1}-q^{j-1})\frac{(aq;q)_{i+j-2}}{(abq^{2};q)_{i+j-2}}
\Bigr)_{1\leq i,j\leq 2m}\\
&=a^{2m(m-1)}q^{\frac{2m(m-1)(4m+1)}{3}}
\prod_{k=1}^{m}\Biggl(\frac{(q, aq;q)_{2k-1} (bq;q)_{2k-2}}{(abq^{2}; q)_{2(k+m)-3}}  \Biggr)^{2}.
\end{align*}
\end{cor}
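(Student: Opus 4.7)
The plan is to specialize Corollary~\eqref{eq:ITZ1} at $c = 1$ and $n = 2m$, and then to evaluate the resulting terminating ${}_4\phi_3$ by the Andrews formula quoted just above. Substituting $c = 1$ turns the ${}_4\phi_3$ factor of \eqref{eq:ITZ1} into
$$
{}_{4}\phi_{3}\!\left[\begin{array}{c} q^{-2m},\, abq^{2m},\,(aq)^{1/2},\,-(aq)^{1/2} \\ aq,\,(abq)^{1/2},\,-(abq)^{1/2}\end{array};q,q\right].
$$
This fits the Andrews template upon taking his parameters to be $(a,b) = ((ab/q)^{1/2},\, (aq)^{1/2})$: indeed $b^{2} = aq$, the Andrews denominator $aq$ becomes $(abq)^{1/2}$, and $a^{2}q^{n+1} = abq^{n}$. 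Since $n = 2m$ is even, the Andrews summation collapses the ${}_4\phi_3$ to
$$
\frac{(aq)^{m} (q, b; q^{2})_{m}}{(abq,\, aq^{2}; q^{2})_{m}}.
$$

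The second step is to plug this closed form back into \eqref{eq:ITZ1} and simplify the resulting expression into the squared product on the right-hand side of Corollary~\ref{cor32}. The power of $a$ checks out immediately: $\tfrac{n(n-3)}{2} + m = 2m(m-1)$. For the Pochhammer bookkeeping I would split the product $\prod_{k=1}^{2m}$ in the prefactor of \eqref{eq:ITZ1} into its odd-$k$ and even-$k$ halves, so that after re-indexing each half yields one of the two identical factors in the squared product on the right-hand side. Where the base-$q^{2}$ Andrews factor meets the base-$q$ prefactor, I would apply the duplication identities $(x;q)_{2j} = (x;q^{2})_{j}(xq;q^{2})_{j}$ and their inverses to pass between the two bases. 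In particular, $(abq; q^{2})_{2m}$ from \eqref{eq:ITZ1} combines with the $(abq; q^{2})_{m}$ in the Andrews denominator to leave $(abq^{2m+1}; q^{2})_{m}$, which then regroups with the denominators $(abq^{2}; q)_{k+2m-2}$ of \eqref{eq:ITZ1} into the squared $(abq^{2}; q)_{2(k+m)-3}$ on the right-hand side. The spurious $(bq; q)_{-1} = (1-b)^{-1}$ coming from the $k = 1$ term of the prefactor is absorbed by the initial factor $(1-b)$ of $(b; q^{2})_{m}$ supplied by Andrews.

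The main obstacle is the $q$-power accounting during this regrouping: the target exponent $\tfrac{2m(m-1)(4m+1)}{3}$ has to emerge from $\tfrac{n(n+1)(2n-5)}{6} + m$ together with all of the $q$-exponents picked up from the base conversions between $(\,\cdot\,;q)$ and $(\,\cdot\,;q^{2})$. This is a routine but tedious arithmetic verification; the conceptual content of the proof is entirely contained in the two substitution steps outlined above.
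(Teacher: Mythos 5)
Your proposal follows exactly the paper's route: specialize \eqref{eq:ITZ1} at $c=1$, sum the resulting ${}_4\phi_3$ by Andrews' terminating $q$-analogue of Watson's formula with the parameter identification you give, and regroup the Pochhammer products into the squared form (indeed the $q$-exponent already matches from $\tfrac{n(n+1)(2n-5)}{6}+m$ alone, since the base conversions introduce no extra powers of $q$). The paper gives no more detail than you do, so the proposal is correct and essentially identical to the published argument.
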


%\section{A Pffafian formula}

 Recall \cite{Aigner} that the Pfaffian of a skew-symmetric matrix $A=(A_{i,j})_{1\leq i,j\leq 2m}$  is defined by
 \begin{equation}\label{eq:def}
 \pfaf A=\sum_{\pi\in \M[1,\ldots, 2m]}\sgn\,\pi \prod_{i<j\atop i,j \text{ matched in } \pi}A_{i,j}.
\end{equation}
Here  $\M[a,\ldots, b]$ is the set of perfect matchings of the complete graph on $\{a,\ldots, b\}$ for any nonnegative integers $a$ and $b$ such that $a<b$, and
 $\sgn\,\pi=(-1)^{\cro\,\pi}$,   where $\cro\,\pi$ is the number of matched pairs $(i,\, j)$ and $(i',\;j')$ in $\pi$ such that $i<i'<j<j'$.
The following result was first proved by Ishikawa et al.~\cite{ITZ10}.

 \begin{cor} For $m\geq 1$,  there holds
\begin{multline*}
\pfaf\biggl((q^{i-1}-q^{j-1})\frac{(aq;q)_{i+j-2}}{(abq^{2};q)_{i+j-2}}
\biggr)_{1\leq i,j\leq 2m}\\
=a^{m(m-1)}q^{\frac{m(m-1)(4m+1)}{3}}
\prod_{k=1}^{m}\frac{(q, aq;q)_{2k-1} (bq;q)_{2k-2}}{(abq^{2}; q)_{2(k+m)-3}}.
\end{multline*}
\end{cor}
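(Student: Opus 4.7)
The plan is to deduce the Pfaffian evaluation directly from the determinant evaluation of Corollary~\ref{cor32}. The key observation is that the matrix
$$A=\Bigl((q^{i-1}-q^{j-1})\frac{(aq;q)_{i+j-2}}{(abq^{2};q)_{i+j-2}}\Bigr)_{1\le i,j\le 2m}$$
is manifestly skew-symmetric, since only the factor $q^{i-1}-q^{j-1}$ is antisymmetric in $(i,j)$, while the $q$-Pochhammer ratio depends only on the sum $i+j$.

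For any skew-symmetric matrix of even order one has the classical identity $(\pfaf A)^{2}=\det A$. Inspecting the right-hand side of Corollary~\ref{cor32} one sees it is already displayed as a perfect square:
$$\det A \;=\; P(a,b,q)^{2}, \qquad P(a,b,q):=a^{m(m-1)}q^{\frac{m(m-1)(4m+1)}{3}}\prod_{k=1}^{m}\frac{(q,aq;q)_{2k-1}(bq;q)_{2k-2}}{(abq^{2};q)_{2(k+m)-3}}.$$
Consequently $\pfaf A=\varepsilon_{m}\,P(a,b,q)$ for some $\varepsilon_{m}\in\{+1,-1\}$. Since $\mathbb{Q}(a,b,q)$ is an integral domain and both $\pfaf A$ and $P$ lie in it, the relation $(\pfaf A-P)(\pfaf A+P)=0$ forces one of the two signs globally; in particular $\varepsilon_{m}$ depends only on $m$, not on the parameters $(a,b,q)$.

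The main obstacle is thus to pin down $\varepsilon_{m}=+1$ for every $m\ge 1$. The base case $m=1$ is immediate: $\pfaf A=A_{1,2}=(1-q)(1-aq)/(1-abq^{2})=P(a,b,q)$. For larger $m$ I would fix the sign by an inductive argument based on a Pfaffian analogue of the Desnanot--Jacobi identity (the Pfaffian counterpart of the recursion used in the proof of Theorem~\ref{newthm}), which expresses a product of $(2m)\times(2m)$ Pfaffians of $A$ in terms of Pfaffians of principal submatrices of the same shape but smaller sizes; inserting the inductive hypothesis into this relation and combining it with the determinant identity of Corollary~\ref{cor32} forces the signs to propagate correctly. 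An alternative, more computational route is to extract the coefficient of the minimal power $a^{m(m-1)}$ from both sides of $\pfaf A=\varepsilon_{m}P$: on the $P$-side this coefficient is a manifestly positive product of $q$-shifted factorials, and on the Pfaffian side it can be identified via the combinatorial definition~\eqref{eq:def} as a sum of matching contributions that can be matched monomial-by-monomial, yielding $\varepsilon_{m}=+1$.
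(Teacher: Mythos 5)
Your reduction of the Pfaffian evaluation to a sign determination is exactly the paper's strategy: skew-symmetry, $(\pfaf A)^2=\det A$, the observation that the right-hand side of Corollary~\ref{cor32} is a perfect square, and the integral-domain argument showing that $\pfaf A=\varepsilon_m P$ with $\varepsilon_m\in\{+1,-1\}$ depending only on $m$. Your base case $m=1$ is also correct. However, the heart of the matter --- proving $\varepsilon_m=+1$ for $m\ge 2$ --- is left as two unexecuted sketches, and neither is convincing as stated. For route (a), the Pfaffian analogue of Desnanot--Jacobi expresses $\pfaf(A)\,\pfaf(A^{i,j,k,l}_{i,j,k,l})$ as an alternating sum of three products of Pfaffians of submatrices obtained by deleting various \emph{pairs} of rows and columns; most of these submatrices (e.g.\ the one with indices $1$ and $2m-1$ deleted) are not of the form covered by your inductive hypothesis, so it is far from clear that the induction closes. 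For route (b), extracting the coefficient of $a^{m(m-1)}$ from $\pfaf A$ via the signed sum over perfect matchings in \eqref{eq:def} and showing that this signed sum equals the \emph{positive} product $q^{\cdot}\prod_k(q;q)_{2k-1}(bq;q)_{2k-2}$ rather than its negative is precisely the original sign problem in lowest order in $a$; asserting that the contributions ``can be matched monomial-by-monomial'' gives no argument for why the signs do not conspire to produce $-P$.

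For comparison, the paper pins down the sign by a degeneration that genuinely reduces the size of the Pfaffian: set $b=0$ and $a=q^{a-1}$, rewrite the entries with the $q$-gamma function as $q^i[j-i]_q\Gamma_q(a+i+j)$, multiply both sides by $[a+1]_q$, and let $a\to-1$. The pole of $\Gamma_q(a+i+j)$ at $a=-1$ occurs only for the matched pair $(0,1)$, so after taking the limit the only surviving matchings are those containing the edge $\{0,1\}$, and the left-hand side collapses to a Pfaffian of the same shape of order $2m-2$; this yields $\varepsilon_m=\varepsilon_{m-1}$ and hence $\varepsilon_m=1$ by induction. Some argument of this kind (or an explicit evaluation at a special point where the sign can be read off) is needed to complete your proof; as written, the sign determination for $m\ge2$ is a genuine gap.
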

\begin{proof}
As the square of the  Pfaffian of any skew-symmetric matrix is its determinant,
we derive from Corollary \ref{cor32} that
\begin{multline}
\pfaf\biggl((q^{i-1}-q^{j-1})\frac{(aq;q)_{i+j-2}}{(abq^{2};q)_{i+j-2}}
\biggr)_{1\leq i,j\leq 2m}  \label{eq:key1} \\
=\varepsilon_{m} a^{m(m-1)}q^{\frac{m(m-1)(4m+1)}{3}}
\prod_{k=1}^{m}\frac{(q, aq;q)_{2k-1} (bq;q)_{2k-2}}{(abq^{2}; q)_{2(k+m)-3}}, 
\end{multline}
where $\varepsilon_{m}^2=1$. By \eqref{eq:key1},  the factor $\varepsilon_{m}$ is a rational function of $a$, $b$ and $q$,
and  only takes  values $1$ or $-1$. Hence, for fixed $m$,  we must have $\varepsilon_m=1$  or
$\varepsilon_m=-1$ regardless the values of  $a$, $b$ and $q$.
It remains  to  show that $\varepsilon_{m}=1$ for all $m\geq 1$.  Obviously we have $\varepsilon_1=1$. Suppose that $m\geq 2$.
Taking $b=0$ and replacing $a$ by $q^{a-1}$  the  identity \eqref{eq:key1} reduces to
\begin{align}
&\hskip -2mm \pfaf\bigl((q^{i}-q^{j}){(q^{a};q)_{i+j}}
\bigr)_{0\leq i,j\leq 2m-1} \label{eq:key2} \\
&=\varepsilon_{m} q^{m(m-1)(a-1)+\frac{m(m-1)(4m+1)}{3}}
\prod_{k=1}^{m}{(q, q^{a};q)_{2k-1}}.  \nonumber
\end{align}
Using  the $q$-gamma function~\cite[p. 20]{GR}
$$
\Gamma_{q}(x)=\frac{(q;q)_{\infty}}{(q^{x};q)_{\infty}} (1-q)^{1-x},\qquad 0<q<1,
$$
we can rewrite \eqref{eq:key2} as
\begin{align}\label{eq:key3}
&\hskip -2mm
\pfaf\left(q^{i}[j-i]_q\Gamma_{q}(a+i+j)
\right)_{0\leq i,j\leq 2m-1}\\
&=\varepsilon_{m}q^{m(m-1)(a-1)+\frac{m(m-1)(4m+1)}{3}}
\prod_{k=1}^{m}[2k-1]_{q}!\, \Gamma_{q}(a+2k-1),  \nonumber
\end{align}
where $[n]_{q}!=[1]_q[2]_q\cdots [n]_q$ and $[k]_{q}=(1-q^{k})/(1-q)$.
If we  multiply both sides of \eqref{eq:key3} by $[a+1]_{q}$
and let $a$ tend to $-1$, then,   the right-hand side becomes
\begin{align}
\varepsilon_{m} q^{\frac{m(m-1)(4m-5)}{3}}
\prod_{k=1}^{m}[2k-1]_{q}! \prod_{k=1}^{m-1}\Gamma_{q}(2k).\label{eq:key4}
\end{align}
On the other hand, by \eqref{eq:def},  the left-hand side can be written  as
\begin{align}
\sum_{\pi\in \M[0,\ldots, 2m-1]}\sgn\, \pi \lim_{a\to -1}\bigl
[a+1]_{q}\prod_{i<j\atop i, j  \textrm{mached in} \,\pi}q^{i}[j-i]_q\Gamma_{q}(a+i+j).\label{eq:key5}
\end{align}
In this sum,   matchings $\pi$ for which all matched pairs $i,\, j$ satisfy $i+j>1$ will not contribute, because the corresponding summands vanish.
Therefore, the survival matchings must match $0$ and $1$ and the sum in \eqref{eq:key5} reduces to
\begin{align*}
&\hskip -10mm
\sum_{\pi'\in \M[2,\ldots, 2m-1]}\sgn\, \pi' \prod_{i<j\atop i, j
\textrm{mached in} \,\pi'}q^{i}[j-i]_q\Gamma_{q}(i+j-1)\\
&=\pfaf\left(q^{i}[j-i]_q\Gamma_{q}(i+j-1)\right)_{2\leq i,j\leq 2m-1}\\[5pt]
&=\pfaf\left(q^{i+2} [j-i]_q\Gamma_{q}(i+j+3)
\right)_{0\leq i,j\leq 2m-3}\\[5pt]
&=\varepsilon_{m-1} q^{\frac{m(m-1)(4m-5)}{3}}
\prod_{k=1}^{m}[2k-1]_{q}!  \prod_{k=1}^{m-1}\Gamma_{q}(2k).
\end{align*}
Comparing with \eqref{eq:key4} we  see that $\varepsilon_{m}=\varepsilon_{m-1}=\cdots =\varepsilon_{1}=1$.
\end{proof}

\begin{remark}
Except the trivial but crucial point that $\varepsilon_m$ is independent of $a,b$ and $q$, the above proof is a $q$-adaptation  of Ciucu and Krattenthaler's proof  \cite{CK} for  the
$q\to 1$ case of \eqref{eq:key3}:
 \begin{align}\label{eq:CK}
 \pfaf((j-i)\Gamma(a+i+j))_{0\leq i,j\leq 2m-1}=\prod_{k=1}^{m}(2k-1)! \Gamma(a+2k-1).
 \end{align}
 As we have shown that $\varepsilon_m$ is actually independent of $q$,  we could also reduce the proof directly to \eqref{eq:CK} by taking the limit
 $q\to 1$ in \eqref{eq:key3}.
 \end{remark}
%%%%%%%%%%%%%%%%%%%%%%%%%%%%%%
 \section{Link to  Gram determinants of Askey-Wilson polynomials}
 %%%
  In this section we show that the determinant formula
 in Theorem~\ref{newthm} is actually related to 
a Gram determinant for the Askey-Wilson orthogonal polynomials~(see \cite{Wilson}). This permits to
  enlighten  the origin of the peculiar matrix coefficients \eqref{defB}.
  
Let $\{p_n(x)\}$ be a sequence of orthogonal polynomials  with respect to a measure $d\mu$, and $\{\phi_k\}$ and $\{\psi_k\}$ be two sequences of polynomials such that
$\phi_k$ and $\psi_k$ are of exact degree $k$. Then
\begin{align}\label{gramwilson}
 \left|
\begin{matrix}
    \mu_{0,0}&\mu_{0,1}& \cdots & \mu_{0,n} \\
   \mu_{1,0}& \mu_{1,1}& \cdots & \mu_{1,n} \\
    \vdots               & \vdots               &        & \vdots               \\
   \mu_{n-1,0}& \mu_{n-1,1}& \cdots & \mu_{n-1,n} \\
    \phi_0(x)&   \phi_1(x)& \cdots &  \phi_n(x)\\
\end{matrix} \right|=C\cdot p_n(x),
\end{align}
where  $\mu_{i,j}=\int \psi_i(x)\phi_j(x)d\mu$ and $C$ represents a factor of normalization.

Note  that the three variables $x, \theta, z$ are related each other as follows:
$$
z = e^{I \theta},\quad
 x = \cos \theta,\quad x=\frac{z+z^{-1}}{2}.
$$
For $|a|, |b|, |c|, |d|<1$ and  $n\geq 0$,  let $h_n:=h_n(a,b,c,d,q)$ with
\begin{align*}
h_0&=\frac{(abcd;q)_\infty}{(q,ab,ac, ad, bc, bd, cd;q)_\infty},\\
h_n&=h_0\frac{(1-q^{n-1}abcd)(q,ab, ac,ad,bc,bd,cd;q)_n}{(1-q^{2n-1}abcd)(abcd;q)_n}.
\end{align*}
Then, the orthogonality of Askey-Wilson polynomials reads
\begin{align}\label{orth}
\oint_{\mathcal C}p_{m}(\cos\theta;a,b,c,d;q)p_{n}(\cos\theta;a,b,c,d;q)
 w(\cos\theta)\frac{dz}{4\pi iz}=\frac{h_n}{h_0} \delta_{mn},
\end{align}
where the contour ${\mathcal C} $ is the unit circle with suitable deformations (see \cite{AW,CSSW}) and
the weight function $w(x):=w(x,a,b,c,d; q)$ is defined by
\begin{align*}
w(\cos\theta)=\frac{(e^{2 I\theta}, e^{-2 I\theta};q)_\infty}{h_0\cdot (ae^{I\theta}, ae^{-I\theta},be^{I\theta}, be^{-I\theta},ce^{I\theta}, ce^{-I\theta},de^{I\theta}, de^{-I\theta};q)_\infty}.
\end{align*}
%%%%%%%%%%%%%%
Except the constant factor $C$,  
 the following representation for the Askey-Wilson polynomials in terms of moments was already given by Atakishiyev and Suslov~\cite{AS92}. Their  proof was based on a generalization of Hahn's approach and 
 Wilson~\cite{Wilson} suggested a method to evaluate this determinant  directly. 
\begin{theorem}\label{wilsondet} For $n\geq 1$, $0\leq i\leq n-1$ and $0\leq j\leq n$ let
\begin{align}\label{Gramcoeff}
 A_{i,j}
 = (ac, ad ; q)_i (bc,bd ; q)_j \frac{(ab ; q)_{i+j}}{(abcd ; q)_{i+j}},
\end{align}
and
$ A_{n,j} = (bz, b/z ; q)_j$. Then
\begin{align}\label{Gramdet}
 \det ( A_{i,j} )_{0 \le i, j \le n} =
 C\cdot  p_n(x; a,b,c,d ;q),
\end{align}
where $x=(z+z^{-1})/2$ and 
\begin{align*}
 C= (-1)^n a^{n(n-1)/2} b^{n(n+1)/2} q^{n(n-1)(2n-1)/6}
\prod_{i=0}^{n-1}\frac{(ab,ac,ad,bc,bd,cd,q ; q)_{i}}{(abcd ; q)_{n+i}}.
\end{align*}
\end{theorem}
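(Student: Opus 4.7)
The plan is to recognize $(A_{i,j})_{0\le i,j\le n}$ as an instance of the Gram-type template \eqref{gramwilson} for the Askey-Wilson inner product $d\mu(\theta)=w(\cos\theta)\,dz/(4\pi iz)$, with the choice
\begin{align*}
\psi_i(x):=(az,a/z;q)_i,\qquad \phi_j(x):=(bz,b/z;q)_j.
\end{align*}
The product expansion $(yz,y/z;q)_k=\prod_{\ell=0}^{k-1}(1-2yq^\ell x+y^2q^{2\ell})$ shows that $\psi_i$ and $\phi_j$ are polynomials in $x=(z+z^{-1})/2$ of exact degrees $i$ and $j$, and the last row $A_{n,j}=\phi_j(x)$ is already in the required form. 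I assume $|a|,|b|,|c|,|d|<1$ so that the Askey-Wilson integral converges; the identity will then extend to all parameter values by analytic continuation.

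The first main step is to verify that
\begin{align*}
A_{i,j}=\oint_{\mathcal C}\psi_i(x)\phi_j(x)\,w(\cos\theta)\,\frac{dz}{4\pi iz}\qquad(0\le i\le n-1,\ 0\le j\le n),
\end{align*}
which follows from the Askey-Wilson integral: the integrand equals $w(\cos\theta;aq^i,bq^j,c,d;q)$ times the scalar ratio $h_0(a,b,c,d)/h_0(aq^i,bq^j,c,d)$, and $\oint w\,dz/(4\pi iz)=1$ reduces the integral to this ratio, which simplifies to the right-hand side of \eqref{Gramcoeff}. Writing $p_n(x;a,b,c,d;q)=\sum_{j=0}^n c_j\phi_j(x)$ in the triangular basis $\{\phi_0,\dots,\phi_n\}$ of polynomials of degree $\le n$, orthogonality \eqref{orth} against every polynomial of degree $<n$ gives $\sum_j A_{i,j}c_j=0$ for $0\le i\le n-1$, while the cofactor identity produces the same linear equations for the vector $\bigl((-1)^{n+j}M_{n,j}\bigr)_j$, where $M_{n,j}$ denotes the $(n,j)$-minor of $(A_{i,j})$. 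Both vectors therefore lie in the same one-dimensional null space (generically $\det(A_{i,j})_{0\le i,j\le n-1}\ne 0$), so they are proportional, and cofactor expansion along the last row yields $\det(A_{i,j})=\kappa\,p_n(x;a,b,c,d;q)$ for a scalar $\kappa$ independent of $x$.

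To pin down $\kappa=C$, I compare top-degree coefficients in $x$. A standard calculation shows that the leading coefficient of $p_n(x;a,b,c,d;q)$ is $2^n(abcdq^{n-1};q)_n$, while only the $j=n$ term in the cofactor expansion contributes to degree $n$ on the left, producing $(-2b)^nq^{n(n-1)/2}M_{n,n}$ with $M_{n,n}=\det(A_{i,j})_{0\le i,j\le n-1}$ an honest Gram determinant. Expanding $\psi_i=\sum_{k\le i}\alpha_{i,k}p_k$ and $\phi_j=\sum_{k\le j}\beta_{j,k}p_k$ triangularly with
\begin{align*}
\alpha_{i,i}=\frac{(-a)^iq^{i(i-1)/2}}{(abcdq^{i-1};q)_i},\qquad \beta_{j,j}=\frac{(-b)^jq^{j(j-1)/2}}{(abcdq^{j-1};q)_j},
\end{align*}
and invoking \eqref{orth}, I obtain the factorization $M_{n,n}=\prod_{i=0}^{n-1}\alpha_{i,i}\beta_{i,i}\,h_i/h_0$. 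The main obstacle is the remaining algebraic simplification: substituting the explicit $h_i/h_0$ from the paper and rearranging the factors $(1-abcdq^{i-1})$, $(1-abcdq^{2i-1})$, $(abcdq^{i-1};q)_i^2$, and $(abcd;q)_i$ into the product $\prod_{i=0}^{n-1}(abcd;q)_{n+i}^{-1}$ must reproduce exactly the $(ac,ad,bc,bd;q)_i$ numerators and the correct powers of $(-1)$, $a$, $b$, and $q$ appearing in $C$. This bookkeeping is routine but tedious.
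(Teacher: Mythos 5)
Your proposal is correct and follows the same skeleton as the paper's proof: identify $(A_{i,j})$ as the Gram matrix \eqref{gramwilson} for $\psi_i=(az,a/z;q)_i$, $\phi_j=(bz,b/z;q)_j$, compute the entries from the Askey--Wilson integral exactly as in \eqref{coefficients}, and fix the constant by comparing coefficients of $x^n$, which reduces everything to the leading minor $\det(A_{i,j})_{0\le i,j\le n-1}$ as in \eqref{lasteq}. The one genuine divergence is how that minor is evaluated. The paper quotes the little $q$-Jacobi Hankel determinant \eqref{littlejacobi} (a special case of its own \eqref{eq:ITZ1}) and pulls out the row and column factors to get \eqref{littlejacobibis}; you instead use the triangular (LDU-type) factorization of a Gram matrix with respect to the orthogonal basis, giving $\det(A_{i,j})_{0\le i,j\le n-1}=\prod_{i=0}^{n-1}\alpha_{i,i}\beta_{i,i}\,h_i/h_0$ with the leading coefficients $\alpha_{i,i},\beta_{i,i}$ you state (which are correct, since $p_k$ has leading coefficient $2^k(abcdq^{k-1};q)_k$ and $(az,a/z;q)_i$ has leading coefficient $(-2a)^iq^{\binom{i}{2}}$). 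Your route is self-contained, needing only the orthogonality relation \eqref{orth} and the explicit norms $h_i$, whereas the paper's route leans on an external determinant evaluation but avoids the final product manipulation. The ``routine but tedious'' simplification you defer does close: the factors $(q,ab,ac,ad,bc,bd,cd;q)_i$ from $h_i/h_0$ supply exactly the numerators of \eqref{littlejacobibis} and of $C$, and the telescoping identities $(1-uq^{i-1})/(u;q)_i=1/(u;q)_{i-1}$ and $(u;q)_{i-1}(uq^{i-1};q)_i(1-uq^{2i-1})=(u;q)_{2i}$ with $u=abcd$ collapse the remaining denominators to $\prod_{i=0}^{n-1}(abcd;q)_{n+i}^{-1}$. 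You also justify more carefully than the paper why the Gram template \eqref{gramwilson} produces $p_n$ up to a scalar (the shared null-space argument); the paper treats this as standard.
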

\begin{proof}
By  \eqref{orth} we have $\oint_{\mathcal C} w(\cos\theta)\frac{dz}{4\pi iz}=1$.
It follows that
\begin{align}\label{coefficients}
\oint_{\mathcal C}  &w(x,a,b,c,d;q)(az, a/z ; q)_i (bz, b/z ; q)_j  \frac{dz}{4\pi iz}\nonumber\\
&=\frac{h_0(aq^i,bq^j,c,d,q)}{h_0(a,b,c,d,q)}\oint_{\mathcal C}  w(x,aq^i, bq^j,c,d;q)\frac{dz}{4\pi iz}\\
&=(ac, ad ; q)_i (bc, bd ; q)_j \frac{(ab ; q)_{i+j}}{ (abcd ; q)_{i+j}},\nonumber
\end{align}
which coincides with  $A_{i,j}$ in \eqref{Gramcoeff} for $i,j=0,\ldots, n$.
Thus, by choosing $ \psi_i = (az, a/z ; q)_i$ and $\phi_j = (bz, b/z ; q)_j$  in \eqref{gramwilson}  we obtain the determinant
in \eqref{Gramdet}. It remains to compute the factor $C$.
As
\begin{align*}
p_{n}(x;a,b,c,d;q)&=2^n(abcdq^{n-1};q)_n x^n+\textrm{ lower terms},\\
 (bz, b/z ; q)_n&=\prod_{k=0}^{n-1}(1-2bxq^k+b^{2}q^{2k}),
 \end{align*}
  comparing the coefficients of $x^n$ in \eqref{gramwilson}  we get
\begin{align}\label{lasteq}
C&=\frac{(-1)^nb^n q^{n(n-1)/2}}{(abcdq^{n-1};q)_n}\det(A_{i,j})_{0\leq i,j\leq n-1}.
%&=\frac{(-1)^nb^n q^{n(n-1)/2}}{(abcdq^{n-1};q)_n}\prod_{i=0}^{n-1}(ac,ad,bc,bd;q)_i \cdot \det\biggl ( \frac{(ab ; q)_{i+j}}{ (abcd ; q)_{i+j}}  \biggr)_{0\leq i,j\leq n-1}.
\end{align}
The determinant in \eqref{lasteq}  is essentially the Hankel determinant associated to  the moments of little $q$-Jacobi polynomials (see \cite{ITZ09})
%There are several ways to evaluate this determinant  and  the corresponding formula reads
\begin{align}\label{littlejacobi}
\det\biggl ( \frac{(ab ; q)_{i+j}}{ (abcd ; q)_{i+j}}  \biggr)_{0\leq i,j\leq n-1}=(ab)^{\frac{n(n-1)}{2}}q^{\frac{n(n-1)(n-2)}{3}}\prod_{k=0}^{n-1} \frac{(q,ab,cd;q)_{k}}{(abcd;q)_{k+n-1}},
\end{align}
which  is also a special case of \eqref{eq:ITZ1}.
It follows from \eqref{littlejacobi} that 
\begin{multline}\label{littlejacobibis}
\det\biggl ((ac, ad ; q)_i (bc,bd ; q)_j \frac{(ab ; q)_{i+j}}{ (abcd ; q)_{i+j}}  \biggr)_{0\leq i,j\leq n-1}\\=(ab)^{\frac{n(n-1)}{2}}q^{\frac{n(n-1)(n-2)}{3}}
\prod_{j=1}^{n-1}(ac, ad,bc,bd ; q)_j
\prod_{k=0}^{n-1} \frac{(q,ab,cd;q)_{k}}{(abcd;q)_{k+n-1}}.
\end{multline}
Substituting this in \eqref{lasteq}  we recover the factor $C$ in formula~\eqref{Gramdet}.
\end{proof}
\begin{remark}
Using \eqref{littlejacobibis}, 
formula~\eqref{Gramdet} can also be proven from the Desnanot-Jacobi adjoint matrix theorem
and a special case of the known contiguous relation~(see \cite[(3.3)]{Kratt93})
\begin{multline}
{}_r\phi_s
\left[
\begin{matrix}
aq,\A\cr
bq,\B\cr
\end{matrix};
q,z
\right]
-
{}_r\phi_s
\left[
\begin{matrix}
a,\A\cr
b,\B\cr
\end{matrix};
q,z
\right]
\\
=
\frac{
(-1)^{1+s-r}z(a-b)
}
{
(1-b)(1-bq)
}
\frac{
\prod_{i=1}^{r-1}(1-A_i)
}
{\prod_{i=1}^{s-1}(1-B_i)
}
{}_r\phi_s
\left[
\begin{matrix}
aq,\A q\cr
bq^2,\B q\cr
\end{matrix};
q,q^{1+s-r}z
\right],
\end{multline}
where $\A=(A_1, \ldots, A_{r-1})$ and $\B=(B_1, \ldots, B_{s-1})$ for $r,s\geq 2$.
\end{remark}

Actually  it is not hard to see that  Theorems~\ref{newthm}  and  \ref{wilsondet} are equivalent.
\begin{proposition}  
The two determinant formulae \eqref{Gramdet}  are  \eqref{eq:det} equivalent.
% corresponds to the Gram determinant \eqref{gramwilson} with Askey-Wilson measure and
%$ \psi_i = (az, a/z ; q)_i$ and $\phi_j = (bz, b/z ; q)_j$ for $i,j=0,\ldots, n$.
\end{proposition}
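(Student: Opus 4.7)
The strategy is to relate the two determinants directly by applying column operations to the $(n+1) \times (n+1)$ Gram matrix of Theorem~\ref{wilsondet} so as to recover, after expansion along the altered last row, the $n \times n$ matrix $(B_{i,j})$ of Theorem~\ref{newthm} up to an explicit scalar factor. Once that scalar reconciles the normalizations $C$ of \eqref{Gramdet} and $D_n(a,b)$ of \eqref{defD}, the equivalence of \eqref{eq:det} and \eqref{Gramdet} follows at once.

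Concretely, for $j=n,n-1,\dots,1$, I would replace column $j$ by $C_j-\alpha_j C_{j-1}$, with
\[
\alpha_j := (1-bzq^{j-1})(1-bq^{j-1}/z) = 1-2bxq^{j-1}+b^2 q^{2(j-1)}.
\]
Since $(bz,b/z;q)_j=\alpha_j(bz,b/z;q)_{j-1}$, this kills every entry of the bottom row except $A_{n,0}=1$. For $0\le i\le n-1$, a direct computation then shows
\[
(1-bcq^{j-1})(1-bdq^{j-1})(1-abq^{i+j-1})-\alpha_j(1-abcdq^{i+j-1}) = -bq^{j-1}\,\Xi_{i,j},
\]
where $\Xi_{i,j}$ denotes the square bracket in the definition \eqref{defB} of $B_{i,j}$. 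Combined with the usual factorizations of the $q$-shifted factorials, this yields
\[
A_{i,j}-\alpha_j A_{i,j-1} = (ac,ad;q)_i\,(bc,bd;q)_{j-1}\,B_{i,j}
\]
for $0\le i\le n-1$ and $1\le j\le n$.

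I would then expand the transformed determinant along the bottom row, producing $(-1)^n$ times the $n\times n$ minor indexed by $0\le i\le n-1$ and $1\le j\le n$. Factoring $(ac,ad;q)_i$ out of row $i$ and $(bc,bd;q)_{j-1}$ out of column $j$ collapses the minor to $\det(B_{i,j})$ times $\prod_{i=0}^{n-1}(ac,ad,bc,bd;q)_i$. Hence
\[
\det(A_{i,j})_{0\le i,j\le n} = (-1)^n \prod_{i=0}^{n-1}(ac,ad,bc,bd;q)_i \cdot \det(B_{i,j})_{0\le i\le n-1,\,1\le j\le n}.
\]
A brief check of the explicit prefactors confirms $C=(-1)^n D_n(a,b)\prod_{i=0}^{n-1}(ac,ad,bc,bd;q)_i$, so the two factored identities \eqref{eq:det} and \eqref{Gramdet} are indeed equivalent, and either implies the other.

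The main obstacle is the polynomial identity relating the left-hand side of the second display to $-bq^{j-1}\Xi_{i,j}$; it is elementary but involves roughly a dozen monomials in $a,b,c,d,x,q^i,q^{j-1}$. The substitutions $u=aq^i$, $v=bq^{j-1}$ keep the bookkeeping tidy, and the identity can also be checked by computer algebra. Every remaining step is routine manipulation of $q$-shifted factorials.
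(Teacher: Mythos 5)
Your proposal is correct and follows essentially the same route as the paper: the identical column operations $C_j \mapsto C_j - (1-2bxq^{j-1}+b^2q^{2j-2})C_{j-1}$ for $j=n,\dots,1$, the same key identity $A_{i,j}-\alpha_j A_{i,j-1}=(ac,ad;q)_i(bc,bd;q)_{j-1}B_{i,j}$, and the resulting reconciliation of the normalizing constants. You in fact spell out more of the bookkeeping (the cofactor expansion along the bottom row and the check that $C=(-1)^n D_n(a,b)\prod_{i=0}^{n-1}(ac,ad,bc,bd;q)_i$) than the paper does, and your intermediate polynomial identity checks out.
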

\begin{proof}
Let  $z = e^{I \theta}$ and 
 $x = \cos \theta$. Consider the matrix $A:=( A_{i,j} )_{0 \le i, j \le n}$, where $A_{i,j}$  are  given  in 
 \eqref{Gramcoeff}.
Upon multiplying the $(j-1)$st column of $A$ by $1 - 2 bx q^{j-1} + b^2 q^{2j-2}$
and subtracting from the $j$th column, for $j = n, n-1, ..., 1$, the
last row of the matrix becomes $(1, 0, ..., 0)$, and
$$
A_{i,j}-(1 - 2 bx q^{j-1} + b^2 q^{2j-2}) A_{i,j-1}=(ac,ad;q)_i(bc,bd;q)_{j-1} B_{i,j},
$$
where $B_{i,j}$  is given in \eqref{defB} for $0\leq i\leq n-1$ and $1\leq j\leq n$.
Hence,
%the
%determinant  formula \eqref{eq:det} follows  from \eqref{Gramdet}.
%As  the procedure is reversible,
the two formulae \eqref{Gramdet}  and   \eqref{eq:det} are  equivalent.
\end{proof}

%%%%%%%%%%%%%%%%%%%%%%
%\section{On the  moments of Askey-Wilson polynomials}
%%%%%%%%%%%%%%%%%%%%%%%%%%%%
By \eqref{orth}  the linear functional  $\L: \C[x]\mapsto \C$  associated to the orthogonal  measure of  the Askey-Wilson polynomials
has the explicit integral expression:
\begin{align}\label{LF}
\L\bigl(x^n\bigr)=\oint_{\mathcal C}\left(\frac{z+z^{-1}}{2}\right)^n w(\cos\theta)\frac{dz}{4\pi iz}.
\end{align}
It follows from \eqref{coefficients}  with $j=0$ and $i=n$ that 
 \begin{align}\label{linfunc}
\L\bigl((az,a/z;q)_n\bigr)=\frac{(ab,ac,ad;q)_n}{(abcd;q)_n}.
\end{align}
Clearly, if we take $\psi_i(x)=\phi_i(x)=x^i$ for $i\geq 0$  in \eqref{gramwilson}, then $\mu_{i,j}=\L\bigl(x^{i+j}\bigr)$ are the moments of Askey-Wilson polynomials 
and we obtain another determinant expression for the Askey-Wilson polynomials.
Such a formula would be interesting if we have a simple formula for the moments~\eqref{LF}.
Recently Corteel et al. \cite{CSSW}  and Ismail-Rahman~\cite{IS} have published  a double-sum formula for the moments  of Askey-Wilson polynomials.
We  would like to point out    that their  formula does follow  straightforwardly  from \eqref{linfunc} and 
 the Newton interpolation formula (see \cite[Chapter~1]{MT}), that we recall below.
\begin{theorem}[Newton's interpolation formula] Let  $b_0, b_1, \ldots, b_{n-1}$ be distinct complex numbers.
Then, for any polynomial $f$ of degree less than or equal to $n$ we have
\begin{align}\label{newton}
f(x)=\sum_{k=0}^n \Biggl( \sum_{j=0}^k\frac{f(b_j)}
{\prod_{r=0, r\neq j}^{k} (b_j-b_r)} \Biggr) (x-b_0)\cdots (x-b_{k-1}).
\end{align}
\end{theorem}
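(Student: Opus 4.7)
The plan is to prove \eqref{newton} by induction on $n$, exploiting the recursion satisfied by divided differences. For brevity write
\[
[b_0,\ldots,b_k]f:=\sum_{j=0}^{k}\frac{f(b_j)}{\prod_{r=0,\,r\ne j}^{k}(b_j-b_r)},
\]
so that the claim reads $f(x)=\sum_{k=0}^{n}[b_0,\ldots,b_k]f\cdot(x-b_0)\cdots(x-b_{k-1})$.

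The base case $n=0$ is immediate, since $f$ is then constant and the formula reduces to $f(x)=f(b_0)$. For the inductive step, polynomial division yields $f(x)=f(b_0)+(x-b_0)g(x)$ with $\deg g\le n-1$, and applying the induction hypothesis to $g$ at the nodes $b_1,\ldots,b_n$ gives, after re-indexing $\ell=k+1$,
\[
f(x)=f(b_0)+\sum_{\ell=1}^{n}[b_1,\ldots,b_\ell]g\cdot\prod_{i=0}^{\ell-1}(x-b_i).
\]
It therefore suffices to show that $[b_1,\ldots,b_\ell]g=[b_0,\ldots,b_\ell]f$ for $1\le\ell\le n$.

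To this end, substitute $g(b_j)=(f(b_j)-f(b_0))/(b_j-b_0)$ into $[b_1,\ldots,b_\ell]g$: the factor $(b_j-b_0)^{-1}$ combines with $\prod_{r\ne j,\,1\le r\le\ell}(b_j-b_r)^{-1}$ to yield exactly the denominator accompanying $f(b_j)$ in $[b_0,\ldots,b_\ell]f$. Isolating the $-f(b_0)$ contribution, the difference $[b_0,\ldots,b_\ell]f-[b_1,\ldots,b_\ell]g$ collapses to
\[
f(b_0)\sum_{j=0}^{\ell}\prod_{r=0,\,r\ne j}^{\ell}(b_j-b_r)^{-1},
\]
and this sum vanishes for $\ell\ge 1$, being the coefficient of $x^\ell$ in the Lagrange interpolating polynomial of the constant function $1$ at the $\ell+1$ distinct nodes $b_0,\ldots,b_\ell$, which is itself the constant $1$.

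The only genuinely delicate step is this last vanishing identity, which is really the familiar divided-difference recursion $[b_0,\ldots,b_k]f=([b_1,\ldots,b_k]f-[b_0,\ldots,b_{k-1}]f)/(b_k-b_0)$ in disguise; modulo that, the argument is routine bookkeeping. An alternative route would be to evaluate both sides of \eqref{newton} at each node $b_0,\ldots,b_n$ and conclude by uniqueness of polynomial interpolation, but I would favour the inductive approach because the factorisation $f=f(b_0)+(x-b_0)g$ packages the manipulation of divided-difference denominators more transparently than handling all $n+1$ evaluations simultaneously.
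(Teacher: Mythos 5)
Your proof is correct. Note that the paper itself offers no proof of this statement: Newton's interpolation formula is quoted as a classical result with a pointer to Milne-Thomson, so there is no in-paper argument to compare against. Your induction --- writing $f(x)=f(b_0)+(x-b_0)g(x)$, applying the hypothesis to $g$ at the shifted nodes $b_1,\ldots,b_n$, and reducing the required identity $[b_1,\ldots,b_\ell]g=[b_0,\ldots,b_\ell]f$ to the vanishing of $\sum_{j=0}^{\ell}\prod_{r=0,\,r\ne j}^{\ell}(b_j-b_r)^{-1}$ for $\ell\ge 1$ --- is the standard divided-difference argument, and every step checks out; in particular the vanishing is correctly justified as the coefficient of $x^{\ell}$ in the Lagrange interpolant of the constant function $1$ at $\ell+1$ nodes (equivalently, it is the divided-difference recursion you mention). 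One cosmetic remark: as printed, the theorem lists only the $n$ nodes $b_0,\ldots,b_{n-1}$, yet the $k=n$ coefficient involves $f(b_n)$ and differences $b_j-b_n$, so $n+1$ distinct nodes $b_0,\ldots,b_n$ are really required; your argument implicitly (and correctly) works with that corrected hypothesis, since the inductive step interpolates $g$ at $b_1,\ldots,b_n$. The alternative you sketch --- evaluating both sides at the $n+1$ nodes and invoking uniqueness of interpolation --- would work equally well, but your choice is self-contained and needs nothing beyond the vanishing identity.
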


Indeed, if
$b_j=(q^{-j}/a+aq^j)/2$ for $j=0,\ldots, n-1$, then
\begin{align*}
\prod_{r=0}^{j-1}(b_j-b_r)&=(-1)^j2^{-j}a^j q^{j\choose 2}(q,q^{-2j+1}/a^2;q)_j,\\
\prod_{r=j+1}^{k}(b_j-b_l)&=(-1)^{k-j}2^{j-k} a^{j-k}
q^{-(j+1)(k-j)-{k-j\choose 2}}(q,a^2q^{2j+1};q)_{k-j}.
\end{align*}
As   $x=(z+1/z)/2$ we have
$$
(x-b_0)\cdots (x-b_{k-1})=(-1)^k2^{-k}a^{-k}q^{-{k\choose 2}}(az, a/z;q)_k.
$$
Substituting the above into
 \eqref{newton} we obtain Proposition~3.1 of  \cite{CSSW}, i.e.,
\begin{align}\label{newtonspecial}
f(x)=\sum_{k=0}^n(az, a/z;q)_k  \sum_{j=0}^k
\frac{q^{k-j^2}a^{-2j} f\bigl(\frac{q^{j}a+q^{-j}/a}{2}\bigr)}
{(q,q^{-2j+1}/a^2; q)_{j} (q,q^{2j+1}a^2;q)_{k-j}}.
\end{align}
 In particular,    
 applying the linear functional  $\mathcal L$ to the two  sides of  \eqref{newtonspecial} with $f(x)=(t+x)^n$
 we obtain  Theorem~1.13 of \cite{CSSW} with a typo corrected, which also appears  in \cite{IR}.
\begin{theorem}[Corteel-Stanley-Stanton-Williams, Ismail-Rahman] For any fixed $t\in \C$,
we have
\begin{align}\label{eq:mom}
\L((t+x)^n)=\sum_{k=0}^n \frac{(ac,ab,ad;q)_k}{(abcd;q)_k}\sum_{j=0}^k
\frac{q^{k-j^2}a^{-2j} \bigl(t+\frac{q^{j}a+q^{-j}/a}{2}\bigr)^n}
{(q,q^{-2j+1}/a^2; q)_{j} (q,q^{2j+1}a^2;q)_{k-j}}.
\end{align}
\end{theorem}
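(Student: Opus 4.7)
The plan is to apply the linear functional $\L$ directly to the identity \eqref{newtonspecial}, which has already been derived from Newton's interpolation formula \eqref{newton} via the choice of nodes $b_j = (q^{-j}/a + aq^j)/2$. Once \eqref{newtonspecial} is in hand, only linearity and one moment evaluation remain, so the theorem is essentially a direct corollary.

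First I would specialize \eqref{newtonspecial} to the polynomial $f(x) = (t+x)^n$. This $f$ has degree exactly $n$ in $x$, so with the $n+1$ distinct nodes $b_0, b_1, \ldots, b_n$ (distinct because $0<q<1$ and $a\ne 0$), Newton's formula holds identically, yielding
\begin{align*}
(t+x)^n = \sum_{k=0}^n (az,a/z;q)_k \sum_{j=0}^k \frac{q^{k-j^2} a^{-2j}\bigl(t + \tfrac{q^{j} a + q^{-j}/a}{2}\bigr)^n}{(q, q^{-2j+1}/a^2;q)_j \, (q, q^{2j+1}a^2;q)_{k-j}}.
\end{align*}

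Next I would apply $\L$ termwise. The inner double sum on the right is a scalar independent of $x$, hence pulls out of $\L$; the only $x$-dependence on the right-hand side is carried by the factor $(az,a/z;q)_k$, and \eqref{linfunc} gives
\begin{align*}
\L\bigl((az,a/z;q)_k\bigr) = \frac{(ab,ac,ad;q)_k}{(abcd;q)_k}.
\end{align*}
Substituting this into the image of the expansion yields precisely the right-hand side of \eqref{eq:mom}, while the left-hand side is $\L((t+x)^n)$ by linearity.

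There is essentially no obstacle in this final step: the substantive work has already been completed before the statement, namely the explicit evaluation of the products $\prod_{r\ne j}(b_j-b_r)$ in terms of $q$-shifted factorials, the telescoping identification $(x-b_0)\cdots(x-b_{k-1}) = (-1)^k 2^{-k} a^{-k} q^{-\binom{k}{2}}(az,a/z;q)_k$ used to pass from \eqref{newton} to \eqref{newtonspecial}, and the moment evaluation \eqref{linfunc} extracted from the orthogonality relation \eqref{orth}. The theorem drops out upon one application of $\L$, which justifies the author's claim that the Corteel--Stanley--Stanton--Williams / Ismail--Rahman formula follows straightforwardly from \eqref{linfunc} and Newton's interpolation formula.
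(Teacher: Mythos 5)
Your proposal is correct and follows exactly the paper's route: the paper obtains the theorem by applying $\L$ to both sides of \eqref{newtonspecial} with $f(x)=(t+x)^n$ and invoking \eqref{linfunc} for $\L\bigl((az,a/z;q)_k\bigr)$, which is precisely what you do. No differences worth noting.
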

\begin{remark} 
The proof of the above formula in \cite{CSSW, IR} was based on a result of Ismail and Stanton~\cite[Theorem~3.3]{CSSW}, which is
 equivalent to \eqref{newtonspecial}.   Since Ismail and Stanton's formula  was originally proved using
the Askey-Wilson operator in \cite[Theorem~20]{IS}, our proof seems more accessible for people who are not familiar with Askey-Wilson operator.
The $t=0$  of  \eqref{eq:mom} is the starting point of \cite{KS}.
 \end{remark}

Finally, it is interesting to note that  
the following special case  of Newton's formula \eqref{newton} has been rediscovered recently by  Mansour et al.~\cite{MSS}.

\begin{cor} We have
\begin{align}\label{man}
(x+a_0)\cdots (x+a_{n-1})=\sum_{k=0}^nu(n,k) (x-b_0)\cdots (x-b_{k-1}),
\end{align}
where
$$
u(n,k)=\sum_{r=0}^k\frac{\prod_{j=0}^{n-1}(b_r+a_j)}{\prod_{j=0, j\neq r}^{k} (b_r-b_j)} \quad \text{for} \quad k=0,\ldots, n.
$$
\end{cor}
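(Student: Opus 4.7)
The plan is to obtain \eqref{man} as a direct specialization of Newton's interpolation formula \eqref{newton}. Let $f(x) = (x+a_0)(x+a_1)\cdots(x+a_{n-1})$. This is a polynomial of degree exactly $n$, so it falls within the hypothesis of Newton's formula for the nodes $b_0,\ldots,b_{n-1}$ (which are assumed distinct). Evaluating at the interpolation nodes gives the clean factorization
$$
f(b_j) = \prod_{i=0}^{n-1}(b_j + a_i),
$$
which is the whole point of this particular choice of $f$: the values $f(b_j)$ are products of the same shape as the product $\prod_{j=0}^{n-1}(b_r+a_j)$ appearing in the statement of \eqref{man}.

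Substituting this expression for $f(b_j)$ into \eqref{newton} yields
$$
\prod_{j=0}^{n-1}(x+a_j) = \sum_{k=0}^{n}\left(\sum_{j=0}^{k}\frac{\prod_{i=0}^{n-1}(b_j+a_i)}{\prod_{r=0,\,r\neq j}^{k}(b_j-b_r)}\right)(x-b_0)\cdots(x-b_{k-1}),
$$
and after the harmless renaming of the inner summation index $j \mapsto r$ the parenthesized coefficient is precisely $u(n,k)$. This establishes \eqref{man}.

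There is essentially no obstacle: the corollary is a textbook specialization of the interpolation formula, and the only implicit hypothesis inherited from the theorem is that the nodes $b_0,\ldots,b_{n-1}$ are distinct, while no condition need be imposed on the $a_j$'s. The only point worth flagging for the reader is that the degree of $f$ equals $n$, so the outer sum indeed terminates at $k=n$ as written, and in fact the leading coefficient of $f$ forces $u(n,n) = 1$, which can be used as a sanity check if desired.
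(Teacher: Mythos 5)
Your proof is correct and is exactly the paper's route: the corollary is presented there as the special case of Newton's interpolation formula \eqref{newton} obtained by taking $f(x)=(x+a_0)\cdots(x+a_{n-1})$, so that $f(b_j)=\prod_{i=0}^{n-1}(b_j+a_i)$, followed by the same renaming of the inner index. Nothing further is needed.
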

\begin{remark}
Clearly the connection coefficients  $u(n,k)$ in \eqref{man} are characterized by
 the recurrence
 \begin{align*}
  u(n,k)=u(n-1,k-1)+(a_{n-1}+b_k)u(n-1,k)
  \end{align*}
 with boundary conditions $ u(n,0)=\prod_{i=0}^{n-1}(a_i+b_0) $ and $u(0,k)=\delta_{0,k}$ (the Kronecker delta function).
 Hence we recover  the main result of Mansour et al.~\cite{MSS} (see also \cite{Simpson}).
 \end{remark}
%\section{A further extension}
\section*{Acknowledgements}
This paper was presented at 
the 25th International Conference on Formal Power Series and Algebraic Combinatorics (FPSAC'13), 
Paris, France, June 24--28, 2013.
We are grateful to an anonymous referee  for  the conference FPSAC'13 for pointing out the link to Gram determinants of Askey-Wilson polynomials and to Sergei Suslov for bringing the reference~\cite{AS92} to our attention.

\bibliographystyle{amsplain}

\end{document}